\documentclass{amsart}
\usepackage{amssymb,amsmath}
\usepackage{amsfonts}
\usepackage{hyperref}
\usepackage{subcaption}
\usepackage{graphicx}
\usepackage{array,booktabs}
\usepackage{multirow}
\usepackage{xparse}
\usepackage{enumerate}
\usepackage{bbm}
\usepackage{mathtools}
\usepackage{stmaryrd}
\usepackage[square,numbers]{natbib}
\ExplSyntaxOn
\DeclareExpandableDocumentCommand{\IfNoValueOrEmptyTF}{mmm}
 {
  \IfNoValueTF{#1}{#2}
   {
    \tl_if_empty:nTF {#1} {#2} {#3}
   }
 }
\ExplSyntaxOff

\newcommand\cev[1]{\overleftarrow{#1}}
\newcommand{\alphabet}{\mathcal{A}}
\DeclareDocumentCommand\pv{s o}{\IfBooleanTF{#1}{{\cev{\mathsf{p}}}}{\mathsf{p}}\IfNoValueF{#2}{^{(#2)}}}
\DeclareDocumentCommand\tv{s o}{\IfBooleanTF{#1}{{\cev{\mathsf{q}}}}{\mathsf{q}}\IfNoValueF{#2}{^{(#2)}}}
\DeclareDocumentCommand\tree{o}{\mathcal{T}\IfNoValueF{#1}{^{(#1)}}}
\DeclareDocumentCommand\tshift{o o}{T\IfNoValueOrEmptyTF{#1}{}{^{(#1)}}_{\IfNoValueF{#2}{#2}}}
\DeclareDocumentCommand\aprod{o o}{X\IfNoValueOrEmptyTF{#1}{}{^{\times #1}}_{\IfNoValueF{#2}{#2}}}
\DeclareDocumentCommand\level{o m o}{\Xi\IfNoValueF{#1}{^{(#1)}}_{#2\IfNoValueF{#3}{:#3}}}
\DeclareDocumentCommand\lattice{o m o}{\Delta\IfNoValueF{#1}{^{(#1)}}_{#2\IfNoValueF{#3}{:#3}}}
\DeclareDocumentCommand\block{m o}{B_{#1\IfNoValueF{#2}{:#2}}}
\DeclareDocumentCommand\pblock{m o}{Z_{#1\IfNoValueF{#2}{:#2}}}
\DeclareDocumentCommand\dv{m o}{\tau_{#1\IfNoValueF{#2}{:#2}}}
\DeclareDocumentCommand\dvset{m o}{D_{#1\IfNoValueF{#2}{:#2}}}
\newcommand{\norm}[1]{|#1|}
\DeclareDocumentCommand\pm{s o}{\IfBooleanTF{#1}{{\cev{\mathsf{P}}}}{\mathsf{P}}\IfNoValueF{#2}{^{(#2)}}}
\DeclareDocumentCommand\tm{s o}{\IfBooleanTF{#1}{{\cev{\mathsf{Q}}}}{\mathsf{Q}}\IfNoValueF{#2}{^{(#2)}}}
\DeclareDocumentCommand\qm{s o}{\IfBooleanTF{#1}{{\cev{\mathsf{R}}}}{\mathsf{R}}\IfNoValueF{#2}{^{(#2)}}}
\DeclareDocumentCommand\trm{m o}{\eta_{#1\IfNoValueF{#2}{:#2}}}
\DeclareDocumentCommand\trmset{m o}{S_{#1\IfNoValueF{#2}{:#2}}}
\newcommand{\childset}[1]{\sigma(#1)}
\DeclareDocumentCommand\proddom{s m o}{\IfBooleanTF{#1}{W}{\Omega}_{#2\IfNoValueF{#3}{:#3}}}
\newcommand{\onemat}{\mathbbm{1}}
\newcommand{\onevec}{\mathbbm{1}}
\newcommand{\stdvec}[1]{\mathsf{e}_{#1}}
\newcommand{\DKL}{D_{\mathrm{KL}}}

\newtheorem{theorem}{Theorem}

\newtheorem{corollary}[theorem]{Corollary}

\newtheorem{definition}[theorem]{Definition}

\newtheorem{lemma}[theorem]{Lemma}

\newtheorem{proposition}[theorem]{Proposition}
\newtheorem{remark}[theorem]{Remark}

\title[Topological pressure of axial product on trees]{On the topological pressure of axial product on trees}
\author{Jung-Chao Ban}
\author{Yu-Liang Wu}
\date{March 2022}

\begin{document}

\maketitle
\begin{abstract}
    This article investigates the topological pressure of isotropic axial products of Markov subshifts on the $d$-tree. We show that the quantity increases with dimension $d$. To achieve this, we introduce the pattern distribution vectors and the associated transition matrices and partially transplant the large deviation theory to tree-shifts. Additionally, we apply our main result to a broader class of shift spaces, accompanied by numerical experiments for verification.
\end{abstract}

\section{Introduction} \label{sec:introduction}
This paper investigates the topological pressure of the axial product of a Markov subshift on the $d$-tree, inspired by recent studies on the limiting entropy of the axial product of $\mathbb{N}^d$ \cite{louidor2013independence,meyerovitch2014independence} and the asymptotic pressure of the axial product on $d$-tree \cite{Petersen2021}. Before presenting the main results, the motivation behind the study is outlined below.

Let $\alphabet$ be a finite set with $\left\vert \alphabet\right\vert =k$ and let $X_{1},\ldots ,X_{d}\subseteq \alphabet^{\mathbb{N}}$ be one-sided subshifts. The associated \emph{axial product of subshifts $X_{1},\ldots ,X_{d}$ on $\mathbb{N}^{d}$}, denoted by $\otimes_{i=1}^{d}X_{i}=X_{1} \otimes \cdots \otimes X_{d} \subset \alphabet^{\mathbb{N}^{d}}$, is defined as
\begin{equation}
    \otimes_{i=1}^{d} X_{i}=\{x \in \alphabet^{\mathbb{N}^{d}}:\forall g \in \mathbb{N}^{d} \text{ }\forall i\in \{1,\ldots ,d\}, x_{g+\mathbb{Z}_+ \stdvec{i}}\in X_{i}\},  \label{3}
\end{equation}
where $x_{g + \mathbb{Z}_+ \stdvec{i}}\in \alphabet^{\mathbb{N}}$ is the sequence obtained by shifting $x$ by $g$, and $\{\stdvec{1},\ldots ,\stdvec{d}\}$ denotes the standard basis of $\mathbb{N}^{d}$. Let $\tree[d]$ be the rooted $d$-tree, which is the Cayley graph of a free monoid generated by $\{f_{1},\ldots,f_{d}\}$ with its identity element $\epsilon$ representing the \emph{root} of the tree. The \emph{axial product of subshifts $X_{1},\ldots ,X_{d} \subseteq \alphabet^{\mathbb{N}}$ on $\tree[d]$}, denoted by $\times_{i=1}^{d}X_{i}=X_{1}\times \cdots \times X_{d}$, is similarly defined as 
\begin{equation}
\times_{i=1}^{d}X_{i}=\{x\in \alphabet^{\tree[d]}:\forall g\in 
\tree[d]\text{ }\forall i\in \{1, \ldots, d\}, (x_{g f_{i}^n})_{n \in \mathbb{Z}_+}\in X_{i}\}\text{.}  \label{4}
\end{equation}
An axial product $\otimes_{i=1}^{d}X_{i}$ (or $\times _{i=1}^{d}X_{i}$) is said to be \emph{isotropic} if $X_{i}=X_{j}$ for all $1\leq i\neq j\leq d$, and \emph{anisotropic} if otherwise. Isotropic axial products
of shifts on $\mathbb{N}^{d}$ were first introduced in \cite{louidor2013independence}, and many important physical systems, such as the hard square model on $\mathbb{N}^{2}$ or $\mathbb{Z}^{2}$, belong to this class. In the context of tree-shifts, a notable subclass of isotropic axial products is the family of \emph{hom tree-shifts}\footnote{Such shifts are also called the associated tree shifts in \cite{Petersen2018b,Petersen2020}.}, where $X_i$ are one-sided Markov subshifts and identical. This family is widely studied in the literature \cite{aubrun2012tree,ban2017mixing,ban2017tree,benjamini1994markov,Petersen2018b,Petersen2020,Petersen2021} and has attracted growing attention since the subshifts defined on it exhibit rich and diverse phenomena in topological (cf.~\cite{ban2017mixing}) and statistical contexts (cf.~\cite{ban2021structure,ban2022topological}).

One of the key quantities that characterize such symbolic dynamical systems is topological entropy, which, in this context, measures the asymptotic growth in the number of admissible patterns with finite supports in the following manner: for a subshift $X \subseteq \alphabet^{G}$ ($G = \mathbb{N}^d$ or $\mathbb{Z}^d$), its \emph{topological entropy} is defined as 
\begin{equation}
h(X)=\lim_{n \to \infty }\frac{\log \norm{\pi([1,n]^d,X)}}{%
\left\vert [1,n]^d\right\vert }\text{,}  \label{1}
\end{equation}
where $\pi: G \times \alphabet^{G} \to \alphabet^{F}$ is the \emph{canonical projection} onto $F$ defined by $(\pi(F,x))_g = x_g$, $\left\vert F\right\vert$ indicates the number of elements in a set $F$, and the limit is known to exist because $\mathbb{N}^d$/$\mathbb{Z}^d$ is an amenable semigroup/group (cf.~\cite{ceccherini2010cellular}) and $\{[1,n]^d\}_{n=1}^{\infty}$ is a \emph{F\o{}lner sequence}. Analogously, we define the \emph{topological entropy} of a tree-shift $\tshift[d] \subseteq \alphabet^{\tree[d]}$ to be
\begin{equation}
h(\tshift[d])=\lim_{n \to \infty }\frac{\log \norm{\pi(\lattice[d]{n},\tshift[d])}}{\left\vert \lattice[d]{n}\right\vert }\text{,}  \label{2}
\end{equation}
where $\level[d]{n}=\{f_1,f_2,\cdots,f_d\}^n$, $\lattice[d]{n}=\cup_{i=0}^{n} \level[d]{i}$, and the existence of the limit (\ref{2}) was proved in \cite{Petersen2018b}. For more on the existence of the limit for shifts defined on a large class of trees, see \cite{Bana}. It is important to note that despite similar definitions of topological entropy for these two types of systems, the resulting quantities differ significantly in various ways. For instance, there are notable differences (see~\cite{ban2021structure}) in the structures of $\{h(\tshift):\tshift \text{ is a Markov tree-shift on } \tree[d]\}$ and $\{h(X):X \text{ is a Markov shift on } \mathbb{N}^{d}\}$.

The theme of this paper centers on the behavior of the topological entropy as the dimension of the underlying lattice grows. This concept was introduced by Louidor, Marcus and Pavlov \cite{louidor2013independence}, leading to the study of \emph{limiting entropy} defined as 
\[
h^{(\infty )}(X) = \lim_{d \to \infty} h(\otimes_{i=1}^d X),
\]
where the limit exists since $h(\otimes_{i=1}^d X)$ is non-increasing in $d$. Meyerovitch and Pavlov \cite{meyerovitch2014independence} later proved that this asymptotic quantity coincides with the independence entropy (see the references for a definition). A similar investigation on the limiting entropy of $\times_{i=1}^d X$ was initiated by Petersen and Salama. In \cite{Petersen2020}, they focused on $X_{G}$, the golden-mean\footnote{A shift space with two symbols $0, 1$ that preclude the existence of neighboring 1's.} subshift and employed the method of \emph{site strip approximation} to prove that $h(X_{G}^{\times d})$ is strictly increasing in $d$ (\cite[Theorem 3.7]{Petersen2020}). 

Recently, Petersen and Salama \cite{Petersen2021} extended the notion of limiting entropy to \emph{topological pressure\footnote{They refer to it as \emph{asymptotic pressure}.}}. Under a similar setting to \cite{burton1995variational}, they introduced the idea of topological pressure to isotropic axial products on a broad class of trees called generalized Fibonacci trees\footnote{See \cite{Petersen2021} for a definition.}. In this paper, we focus on the cases where the underlying lattices are $d$-trees. Under the circumstances, one may consider a physical system of particles sitting on the $d$-tree so that the energy assigned to a particle of type $a \in \alphabet$ comes solely from the influence of the ambient field $\log w_a$ ($w \in \mathbb{R}_{\ge 0}^{\alphabet}$) and the pairwise interaction $\log A_{a,b}$ ($A \in \mathbb{R}_{\ge 0}^{\alphabet \times \alphabet}$) with neighboring particle of type $b \in \alphabet$. Collectively, such information is embedded in a single matrix $E \in \mathbb{R}_{\ge 0}^{\alphabet \times \alphabet}$ defined as $E_{a,b} = w_a A_{a,b}$. Under these assumptions, the configurations adhering to the pairwise interaction constraint $A$ naturally form an isotropic axial product $\aprod[d][E]$, where
\[
\aprod[][E] = \{x \in \mathbb{Z}_+: E_{x_{n+1}, x_n} > 0, \forall n \in \mathbb{Z}_+\},
\]
and one can take the same route as in the study of thermodynamic formalism to define the \emph{partition function} $\norm{\pblock{n}(\aprod[d][E],E)}$ on the finite subsystem $\lattice{n}$ as
\begin{equation} \label{eq:partition_function}
    \norm{\pblock{n}(\aprod[d][E],E)} = \sum_{u \in \pi(\lattice[d]{n},\aprod[d][E])} w_{u_\epsilon} \prod_{g \in \lattice[d]{n-1}} \prod_{i=1}^{d} E_{u_{g f_i}, u_{g}}
\end{equation}
and the \emph{topological pressure} as
\begin{equation} \label{eq:topological_pressure}
    \mathbf{P}(\aprod[d][E],E) = \lim_{n \to \infty} \frac{\log \norm{\pblock{n}(\aprod[d][E], E)}}{\norm{\lattice[d]{n}}},
\end{equation}
where the existence of the limit was proved in \cite[Theorem 3.10]{Petersen2021}, and $\mathbf{P}(\aprod[d][E],E) = h(\aprod[d][E])$ if $E$ is a $0$-$1$ matrix. Moreover, the authors showed under the assumption
\begin{equation} \tag{A} \label{eq:assumption}
    \sum_{a \in \alphabet} E_{a,b} > 0 \quad \text{ and } \quad \sum_{b \in \alphabet} E_{a,b} > 0,
\end{equation}
that the topological pressure $\mathbf{P}(\aprod[d][E],E)$ converges asymptotically, as $d \to \infty$, to
\begin{equation}
\log r_{E} := \log \max \left\{\sum_{a \in \alphabet} E_{a,b}: b \in \alphabet\right\},  \label{5}
\end{equation}
generalizing the limiting entropy to limiting pressure. Building upon this, this paper further establishes that $\mathbf{P}(\aprod[E][d],E)$ is increasing in $d$ with the aid of \emph{pattern distribution}. Specifically, by letting $\Gamma_{\alphabet}$ represent the set of all probability vectors indexed by $\alphabet$ and $\Upsilon_{\alphabet}$ be the set of left stochastic matrices acting on $\Gamma_{\alphabet}$, we successfully relate the topological pressure to the maximum $P^{(\infty)}(d,E)$ of the following optimization problem:
\begin{equation} \label{eq:infinite_optimization}
    \begin{cases}
        \text{maximize} & \sum\limits_{j=0}^{\infty} \frac{d-1}{d^{j+1}} \sum\limits_{a \in \alphabet} \sum\limits_{b \in \alphabet} \pv[j+1]_a \pm[j+1]_{b,a} \log \frac{E_{b,a}}{\pm[j+1]_{b,a}} \\
        \text{subject to} & \pv \in \Gamma_{\alphabet}^{\mathbb{Z}_+}, \pm \in \Upsilon_{\alphabet}^{\mathbb{Z}_+}, \\
        & \pm[j]_{a,b} = 0 \text{ if } E_{a,b} = 0, \\
        & \pv[j] = \pm[j] \pv[j+1], 0 \le j < \infty,
    \end{cases}
    \tag{Problem 1}
\end{equation}
obtaining our main result.
\begin{theorem} \label{thm:pressure_monotonicity} \label{Thm: 1}
Suppose that the interaction matrix $E$ satisfies \eqref{eq:assumption}. Then, the following assertions hold.
\begin{enumerate}
    \item[(a)] $P^{(\infty)}(d,E)$ is continuous and increasing in $d$ on $(1,\infty)$.
    \item[(b)] $P^{(\infty)}(d,E)=\mathbf{P}(\aprod[d][E],E)$ for all $d \in \mathbb{N} \setminus \{1\}$.
    \item[(c)] Let $\rho(E)$ be the spectral radius of $E$ and $r_E$ be as in \eqref{5}. We have
    \begin{equation}
    \lim_{d \to 1+} P^{(\infty)}(d,E)=\log \rho(E) \text{ and } \lim_{d \to \infty} P^{(\infty)}(d,E)=\log r_E.
    \label{8}
    \end{equation}
\end{enumerate}
\end{theorem}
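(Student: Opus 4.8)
The plan is to route everything through a finite-level large-deviation analysis: identify $\mathbf P(\aprod[d][E],E)$ with the value of Problem~1 and then read (a) and (c) off the structure of Problem~1; part~(b) is the core. First I would run a method-of-types argument on the finite trees $\lattice[d]{n}$. To each admissible $u\in\pi(\lattice[d]{n},\aprod[d][E])$ attach its \emph{pattern-distribution data}: the empirical symbol distribution on each level $\ell$ and, for $1\le\ell\le n$, the empirical normalized edge-transition matrix across levels $\ell-1$ and $\ell$. Since the axial-product constraint makes the $d$ subtrees below a vertex conditionally independent given its symbol, the weight of $u$ in \eqref{eq:partition_function} is a product of the $E_{b,a}$ along edges — a function of the edge data only — while the number of $u$ with a prescribed consistent profile is a telescoping product, over levels and over ``parent'' symbols, of multinomial coefficients. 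Stirling's formula then turns $\log\norm{\pblock{n}(\aprod[d][E],E)}$ into the maximum over consistent profiles of a sum whose $\ell$-th block is a conditional-entropy term plus a $\sum\log E$ term, with additive error $O(\norm{\alphabet}^{2}n^{2}\log d)$ from Stirling remainders and from there being only $\exp(O(n^{2}))$ profiles; since $\norm{\lattice[d]{n}}=\tfrac{d^{n+1}-1}{d-1}$, this is $o(\norm{\lattice[d]{n}})$. Re-indexing levels by distance $j$ from the leaves replaces the normalized block weight $d^{\,n-\ell}/\norm{\lattice[d]{n}}$ by $\tfrac{d-1}{d^{j+1}}(1+o(1))$, so $\tfrac{1}{\norm{\lattice[d]{n}}}\log\norm{\pblock{n}(\aprod[d][E],E)}$ equals, up to $o(1)$, the value $P^{(n)}(d,E)$ of the $n$-level truncation of Problem~1 (rationality of the empirical data being absorbed by uniform continuity and density).

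Next I would show $P^{(n)}(d,E)\to P^{(\infty)}(d,E)$. The summand $\sum_{a,b}\pv_{a}\pm_{b,a}\log\tfrac{E_{b,a}}{\pm_{b,a}}$ is bounded in absolute value by a constant $C_{E}$ depending only on $E$ and $\norm{\alphabet}$ (use \eqref{eq:assumption} for the $\log E$-part and $0\le-\sum_{b}\pm_{b,a}\log\pm_{b,a}\le\log\norm{\alphabet}$ for the entropy part), so the tail of the Problem~1 objective beyond level $M$ is $\le C_{E}d^{-M}$, uniformly over feasible sequences. Hence any feasible sequence truncates (losing $O(d^{-n})$) to an admissible finite datum, giving $\liminf_{n}P^{(n)}(d,E)\ge P^{(\infty)}(d,E)$; conversely, using \eqref{eq:assumption} — specifically that every probability vector equals $\mathsf P\lambda$ for some left-stochastic $\mathsf P$ with the zero pattern of $E$ and some $\lambda\in\Gamma_{\alphabet}$ (place each row's mass on one admissible entry, then normalise columns) — every consistent finite datum extends to an infinite feasible sequence, giving $\limsup_{n}P^{(n)}(d,E)\le P^{(\infty)}(d,E)$. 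So the limit in \eqref{eq:topological_pressure} exists (reproving it for these shifts) and equals $P^{(\infty)}(d,E)$, which is (b). In particular the ``$j=0$'' constraint of Problem~1 is vacuous under \eqref{eq:assumption}.

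For (a): the feasible set $K$ of Problem~1 is compact (Tychonoff; the constraints $\pv[j]=\pm[j]\pv[j+1]$ and the zero pattern are closed) and its objective is continuous on $K$ and jointly continuous in $(d,\cdot)$ on $(1,\infty)\times K$, being the locally-uniform-in-$d$, uniform-in-$K$ limit of its partial sums; hence $d\mapsto\max_{K}(\cdot)=P^{(\infty)}(d,E)$ is continuous and the maximum is attained. For monotonicity fix $1<d_{1}<d_{2}$, take a feasible $(\pv[m],\pm[m])_{m\ge1}$ attaining $P^{(\infty)}(d_{1},E)$, and set $\psi_{m}=\sum_{a,b}\pv[m]_{a}\pm[m]_{b,a}\log\tfrac{E_{b,a}}{\pm[m]_{b,a}}$ and $A_{m}=\sum_{l\ge0}\tfrac{d_{1}-1}{d_{1}^{l+1}}\psi_{m+l}$. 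Every shifted tail is again feasible, so $A_{m}\le P^{(\infty)}(d_{1},E)=A_{1}$; and $A_{m}=\tfrac{d_{1}-1}{d_{1}}\psi_{m}+\tfrac{1}{d_{1}}A_{m+1}$ gives $\psi_{m}=\tfrac{d_{1}A_{m}-A_{m+1}}{d_{1}-1}$. Evaluating the $d_{2}$-objective on this same (feasible) sequence and summing by parts gives $\tfrac{(d_{2}-1)A_{1}-(d_{2}-d_{1})B}{d_{1}-1}$, where $B=\sum_{m\ge1}\tfrac{d_{2}-1}{d_{2}^{m}}A_{m}$ is a convex combination of the $A_{m}\le A_{1}$, hence $B\le A_{1}$; therefore this value is $\ge A_{1}$ and $P^{(\infty)}(d_{2},E)\ge P^{(\infty)}(d_{1},E)$ (strictly unless the optimal $(\psi_{m})$ is constant).

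For (c) and the main difficulty: Jensen on each column, $\sum_{b}\pm_{b,a}\log\tfrac{E_{b,a}}{\pm_{b,a}}\le\log\sum_{b}E_{b,a}\le\log r_{E}$, gives $\psi_{m}\le\log r_{E}$ and so $P^{(\infty)}(d,E)\le\log r_{E}$ for all $d$; with (a), (b) and the fact \cite{Petersen2021} that $\mathbf P(\aprod[d][E],E)\to\log r_{E}$ along integers, this forces $\lim_{d\to\infty}P^{(\infty)}(d,E)=\log r_{E}$. For $d\to1+$: constant feasible sequences $(\pi,\mathsf P)$ with $\mathsf P\pi=\pi$ have objective $\sum_{a,b}\pi_{a}\mathsf P_{b,a}\log\tfrac{E_{b,a}}{\mathsf P_{b,a}}$, whose supremum over such pairs is the classical variational value $\log\rho(E)$ of the one-dimensional $E$-weighted shift (restricting to an irreducible component carrying $\rho(E)$), so $P^{(\infty)}(d,E)\ge\log\rho(E)$ for every $d$. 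The reverse bound is the step I expect to be hardest. It should follow from a transfer-operator estimate — reading the level data as a time-inhomogeneous Markov chain and comparing its $E$-weighted law with $\onevec^{\mathsf{T}}E^{M}\onevec$ — giving $S_{M}:=\sum_{m=1}^{M}\psi_{m}\le M\log\rho(E)+o(M)$ \emph{uniformly} over feasible sequences; since any feasible sequence's $d$-objective equals $\tfrac{(d-1)^{2}}{d}\sum_{M\ge1}d^{-M}S_{M}$ (summation by parts), taking the supremum and then $d\to1+$ gives $\limsup_{d\to1+}P^{(\infty)}(d,E)\le\log\rho(E)$. The points needing genuine care are: making the method-of-types error truly $o(\norm{\lattice[d]{n}})$ and uniform; the bookkeeping identifying $P^{(n)}(d,E)$ with the truncated Problem~1 after re-indexing; and the uniform transfer-operator bound underlying the $d\to1+$ limit, which must hold for all feasible sequences, not just periodic ones.
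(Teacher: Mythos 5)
The core of your plan—part (b)—has a genuine gap at the step you dismiss as ``rationality of the empirical data being absorbed by uniform continuity and density.'' Your method-of-types identity on the root-anchored tree $\lattice{0}[n]$ is exact and gives the upper bound $\tfrac{1}{\norm{\lattice{0}[n]}}\log\norm{\pblock{0}[n]}\le P^{(n)}(d,E)+o(1)$ cheaply, but the lower bound requires realizing a near-optimal \emph{continuous} profile as the empirical profile of an actual admissible configuration, and near the root this cannot be done with vanishing error: the root has only $d$ children, so the top-level empirical transition matrix is quantized in multiples of $1/d$ (level $\ell$ in multiples of roughly $d^{-\ell-1}$), and these errors are pushed through the compatibility constraints $\pv[j]=\pm[j]\pv[j+1]$ to \emph{every} lower level; they accumulate to a constant of order $1/(d-1)$, not $o(1)$. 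The smallness of the weights $\tfrac{d-1}{d^{j+1}}$ on the root-side summands does not help, because the discrepancy sits in the distributions at the heavily weighted leaf-side levels (e.g.\ for $d=2$ the root can only split its children $0{:}2$, $1{:}1$, $2{:}0$ while the optimizer wants an irrational split, and for matrices $E$ with forced transitions there is no freedom to steer back). This is exactly why the paper does \emph{not} work root-anchored: it analyzes blocks on windows $\lattice{n}[n+k]$, where every level has at least $d^{n}$ vertices so quantization is uniformly fine (Lemma \ref{lem:dense_limiting_domain} and Proposition \ref{prop:dense_pattern}, including the ``typical'' device for zero entries), and then needs an extra pasting lemma—connect the root to level $n$ by an admissible path using \eqref{eq:assumption} and glue $d^{n}$ optimal $k$-blocks, giving $\norm{\pblock{n}[n+k](\tv,\tm)}\ge\norm{\pblock{0}[k](\tv,\tm)}^{d^{n}}$—to bring the window quantity back to $\mathbf{P}$. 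Some substitute for this shift-the-window-and-paste step (or an explicit steering argument) is missing from your outline, and without it your identification of the normalized log-partition function with the truncated Problem 1 value is unproved in the direction you need; your (c) statement for $d\to\infty$ also leans on (b), so it inherits this gap, and in addition it imports the Petersen--Salama asymptotics rather than proving the lower bound (a one-line feasible sequence—put $\stdvec{b^{*}}$ at level $1$ with column $b^{*}$ of $\pm[0]$ proportional to $E_{\cdot,b^{*}}$, then extend upward using \eqref{eq:assumption}—gives $P^{(\infty)}(d,E)\ge\tfrac{d-1}{d}\log r_{E}+O(1/d)$ directly).

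The remaining ingredients are correct, and two of them are genuinely different from, and arguably cleaner than, the paper's arguments. Your monotonicity proof—shift-invariance of the feasible set plus Abel summation, yielding the value $\tfrac{(d_{2}-1)A_{1}-(d_{2}-d_{1})B}{d_{1}-1}\ge A_{1}$ since $B\le A_{1}$—is valid (and works with near-optimal sequences, so attainment is not even needed), whereas the paper differentiates the explicit recursive maximizer of Proposition \ref{prop:maximizer} and invokes equicontinuity and Arzel\`a--Ascoli; your continuity and truncation arguments ($P^{(n)}\to P^{(\infty)}$ with the extension construction from \eqref{eq:assumption}) are fine. For $d\to1+$, the bound you flag as hardest is in fact easy: interpreting a feasible sequence as a time-inhomogeneous Markov chain and applying Jensen gives $\sum_{m<M}\psi_{m}\le\log(\onevec^{T}E^{M}\pv[M])\le M\log\rho(E)+o(M)$ uniformly, and your Abel-summation reduction then closes the limit; the paper instead augments $E$ to a matrix with spectral radius $\rho(E)+\epsilon$ and iterates eigenvector estimates through the recursion of Remark \ref{rmk:iteration}. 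So the proposal would be salvageable, but only after supplying the window/pasting (or steering) argument for (b).
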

\noindent Heuristically speaking, the maximizing probability vectors $\pv$ and stochastic matrices $\pm$ essentially capture the characteristics of a ``typical configuration'' in $\aprod[d][E]$ as discussed in Section \ref{sec:combinatorial_optimization}. Applying this theorem, the following corollary partially generalizes a result discussed in \cite{Piantadosi2008} regarding the monotone convergence of entropy for subshifts on free groups.
\begin{corollary}
    Let $\mathcal{G}^{(d)}$ be a free group with $d$ generators, $E=E^T$ satisfy assumption \eqref{eq:assumption}, and $Y^{(d)}_E$ be a Markov shift space over $\mathcal{G}^{(d)}$. Then, the entropy $h(Y^{(d)}_E)$ is increasing in $d$.
\end{corollary}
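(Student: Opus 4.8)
The plan is to reduce the entropy of a Markov shift on the free group $\mathcal{G}^{(d)}$ to the topological pressure of an axial product on the $d$-tree, and then invoke Theorem \ref{thm:pressure_monotonicity}(a)--(b). First I would make precise the Markov shift $Y^{(d)}_E \subseteq \alphabet^{\mathcal{G}^{(d)}}$: it consists of all labelings $x$ of the Cayley graph of $\mathcal{G}^{(d)}$ (with respect to the standard symmetric generating set $\{g_1^{\pm 1},\dots,g_d^{\pm 1}\}$) such that $E_{x_h, x_{hg_i}} > 0$ for every $h \in \mathcal{G}^{(d)}$ and every $i$. Since $E = E^T$, the constraint is symmetric, so it does not matter which endpoint of an edge we read first; this symmetry is exactly what lets a free-group constraint be reconstructed from a tree constraint. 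The Cayley graph of $\mathcal{G}^{(d)}$ is the $2d$-regular tree, and a F\o lner-type (here, a ball) exhaustion gives $h(Y^{(d)}_E) = \lim_{n\to\infty} \frac{\log \norm{\pi(B_n, Y^{(d)}_E)}}{\norm{B_n}}$ where $B_n$ is the ball of radius $n$ in $\mathcal{G}^{(d)}$; existence of this limit follows from the amenability/subadditivity arguments already cited in the introduction (or one may cite \cite{Piantadosi2008} directly).

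The key step is a bijective (or at least entropy-preserving) correspondence between admissible patterns of $Y^{(d)}_E$ on the ball $B_n$ and admissible patterns of the hom tree-shift $\aprod[d][E]$ (with $E$ a $0$--$1$ matrix, so $\mathbf{P}(\aprod[d][E],E) = h(\aprod[d][E])$) on lattices $\lattice[d']{m}$ of a related tree. The cleanest route: pick any vertex $v$ of the $2d$-regular tree as a root; removing $v$ splits the tree into $2d$ rooted subtrees, each of which is a rooted $(2d-1)$-tree, i.e.\ a copy of $\tree[2d-1]$. A labeling of $B_n$ respecting the symmetric constraint $E$ is thus determined by the label of $v$ together with $2d$ independent labelings of depth-$n$ lattices in $\tree[2d-1]$ that are compatible with the constraint and with the label at $v$. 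Writing $M$ for the maximal entry-sum of $E$ and counting, one gets $\norm{\pi(B_n, Y^{(d)}_E)} \asymp \norm{\pi(\lattice[2d-1]{n}, \aprod[2d-1][E])}^{(2d)/(2d-1)\cdot(1+o(1))}$ up to a bounded multiplicative factor coming from the single vertex $v$; taking logarithms, dividing by $\norm{B_n}$, and using $\norm{B_n}/\norm{\lattice[2d-1]{n}} \to 2d/(2d-1)$ yields
\[
h(Y^{(d)}_E) = \frac{2d}{2d-1}\, h\!\left(\aprod[2d-1][E]\right) \cdot \frac{2d-1}{2d} \cdot (1 + o(1)) \longrightarrow h\!\left(\aprod[2d-1][E]\right) = \mathbf{P}\!\left(\aprod[2d-1][E],E\right).
\]
I would double-check the precise constant in front; the point is only that $h(Y^{(d)}_E)$ equals, or is an explicit increasing function of, $\mathbf{P}(\aprod[d'][E],E)$ with $d' = 2d-1$, which is strictly increasing in $d$.

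Granting the correspondence, the conclusion is immediate: by Theorem \ref{thm:pressure_monotonicity}(b) we have $\mathbf{P}(\aprod[d'][E],E) = P^{(\infty)}(d',E)$ for every integer $d' \ge 2$, and by part (a) the function $d' \mapsto P^{(\infty)}(d',E)$ is increasing on $(1,\infty)$; since $d \mapsto 2d-1$ is increasing, $h(Y^{(d)}_E)$ is increasing in $d$. The symmetry hypothesis $E = E^T$ is used only to guarantee that the free-group (undirected) Markov constraint coincides with the tree-shift (directed) constraint once a root is chosen, so that the same $0$--$1$ matrix $E$ governs both systems. The main obstacle I anticipate is getting the combinatorial reduction exactly right at the root vertex $v$: one must verify that the labels on the $2d$ subtrees interact \emph{only} through the shared value $x_v$, and that the sub-exponential correction from the single extra vertex genuinely washes out in the normalized limit; this is the one place where a careful (but routine) counting argument is needed rather than a direct citation.
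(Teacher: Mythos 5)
Your proposal is correct and takes essentially the same route as the paper: the whole point is the identity $h(Y^{(d)}_E)=h(X_E^{\times(2d-1)})$ — which the paper simply cites from \cite[Proposition 4.5]{Bana} and you re-derive by splitting the $2d$-regular Cayley tree at a root vertex into $2d$ copies of the rooted $(2d-1)$-tree, using $E=E^T$ to match the undirected constraint with the directed one — followed by Theorem \ref{thm:pressure_monotonicity}(a)--(b) applied to $d'=2d-1$. The careful counting you flag at the root (subtrees interacting only through $x_v$, sub-exponential corrections washing out) is exactly the content of the cited proposition, so no further gap remains.
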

\noindent This follows from the fact that $h(Y^{(d)}_E)=h(\aprod[(2d-1)][E])$, as is proved in \cite[Proposition 4.5]{Bana}.

A few remarks could be made at this point.
\begin{enumerate}
    \item Assumption \eqref{eq:assumption} is reasonable in that (a) by dropping all such $b \in \alphabet$ that $\sum_{a \in \alphabet} E_{a,b} = 0$, we obtain a submatrix $E'$ of $E$ with $X_{E'} = X_{E}$, leading to $\mathbf{P}(X_{E'}^{\times d},E') = \mathbf{P}(X_{E}^{\times d},E)$, and (b) similarly, by omitting such $b \in \alphabet$ that $ \sum_{b \in \alphabet} E_{a,b} = 0$, there exist a subset $\alphabet'$ of $\alphabet$ and its associated submatrix $E'$ of $E$ such that $\mathbf{P}(X_{E'}^{\times d},E')=\mathbf{P}(X_{E}^{\times d},E)$ for all $d \ge 2$. The equality follows from an essentially identical argument to that for a one-sided Markov shift.
    \item From Theorem \ref{Thm: 1}, if $E$ is a binary matrix, then $\mathbf{P}(X_{E}^{\times d},E)=h(X_{E}^{\times d})$ is increasing in $d$, as opposed to $h(\otimes_{i=1}^d X_{E})$, which is decreasing in $d$.
    \item Aside from its capability to demonstrate that $\mathbf{P}(\aprod[d][E],E)$ is increasing in $d \ge 2$, the function $d \mapsto P^{(\infty)}(d,E)$ is also interesting in that it interpolates topological pressure between tree-shifts $\aprod[d][E]$ and one-sided subshift $\aprod[][E] = \aprod[1][E]$, given that $P^{(\infty)}(1+,E)=\log \rho(E)$ coincides with the topological pressure of the potential $t \mapsto \log E_{t_1, t_0}$ defined for $t \in X_{E}$. 
\end{enumerate}

The paper is organized as follows. Section \ref{sec:preliminaries} provides background on tree shifts and introduces the key concept of pattern distribution, which is central to our exposition. Section \ref{sec:combinatorial_optimization} relates the topological pressure to the intermediate quantity $P^{(\infty)}(d,E)$ and its finite approximation $P^{(k)}(d,E)$ (maximum of \eqref{eq:limiting_optimization}), for which the discussions are summarized in Theorem \ref{thm:finite_optimizer_properties}. The proofs of Theorems \ref{thm:finite_optimizer_properties} and \ref{thm:pressure_monotonicity} are postponed until Section \ref{sec:finite_optimizer_properties} and Section \ref{sec:pressure_monotonicity}, respectively. Finally, Section \ref{sec:exp} contains two examples with their $P^{(\infty)}(d,E)$ plotted for verification purposes (Figures \ref{fig:golden-mean_top_entropy} and \ref{fig:golden-mean_top_pressure}).

\section{Preliminaries} \label{sec:preliminaries}
\subsection{Markov tree-shifts and topological pressure} \label{sec:basic_def}
To introduce the idea of pattern distribution, we should generalize topological entropy and topological pressure in a way that instead of being confined to initial $n$-subtree $\lattice[d]{n}$, we consider projections onto the set
\[
\lattice[d]{n}[m]=\cup_{i=n}^{m} \level[d]{i}.
\]
Bearing this in mind, we describe the set of \emph{blocks}, in terms of the projection map, as
\[
\block{n}[m](\aprod[d][E]) = \pi(\lattice[d]{n}[m],\aprod[d][E]):= \{(t_g)_{g \in \lattice[d]{n}[m]}: t \in \aprod[d][E]\}.
\]
The \emph{weight} of $u \in \block{n}[m](\aprod[d][E])$ on $\lattice[d]{n}[m]$ is then defined as 
\begin{equation*}
    \pblock{n}[m][u,E]=\begin{cases}
        w[u|_{\level[d]{n}}] \prod_{g \in \lattice[d]{n}[m-1]} \prod_{i=1}^{d} E_{u_{g f_i}, u_{g}} & \text{if } n < m, \\
        w[u|_{\level[d]{n}}] & \text{otherwise},
    \end{cases}
\end{equation*}
where notation $w$ is slightly abused to express the weight of a block $u \in \block{n}[n](\aprod[d][E])$:
\begin{equation*}
    w[u|_{\level[d]{n}}]=\prod_{g \in \level[d]{n}} w_{u_{g}}.
\end{equation*}
We then can introduce an analog of the partition function \eqref{eq:partition_function} defined as 
\[
\norm{\pblock{n}[m](\aprod[d][E],E)}=\sum_{u \in \block{n}[m](\aprod[d][E])} \pblock{n}[m][u,E],
\]
defined on $\lattice[d]{n}[m]$ so that $\norm{\pblock{n}(\aprod[d][E],E)}=\norm{\pblock{0}[n](\aprod[d][E],E)}$. It is noteworthy that we suppress the notation $w \in \mathbb{R}$ in all of the notations above since the limit \eqref{eq:topological_pressure} is independent of the vector. For convenience, we usually further omit the dependency on $\aprod[d][E]$, $E$, and $d$ for the above notations as long as their meanings are clear from the context.

Our conventions and notation for matrices and vectors are as follows. Recall that $\Gamma_{\alphabet}$ is the set of all probability vectors indexed by $\alphabet$ and $\Upsilon_{\alphabet}$ is the set of left stochastic matrices acting on $\Gamma_{\alphabet}$. The notation for vectors in $\Gamma_{\alphabet}$ will be in a lowercase sans serif font, such as $\pv, \tv$, while the matrices in $\Upsilon_{\alphabet}$ will be denoted in uppercase, such as $\pm, \tm$. A left stochastic matrix $\pm$ has each of its column summing to $1$, namely, $\sum_{a \in \alphabet} \pm_{a,b}=1$ for all $b \in \alphabet$. The transpose of a matrix $M$ is denoted by $M^T$, and for a vector $v$, its transpose is written as $v^T$. The product of two square matrices $M$ and $N$ (or the product of a matrix $M$ and a vector $v$) of the same dimension is denoted by $M N$ (or $M v$). For conciseness, we denote $\prod_{i=1}^k M_i = M_1 M_2 \cdots M_k$ for any $k$ square matrices of the same dimension. Finally, the standard unit vector associated with the symbol $a$ is denoted by $\stdvec{a}$, and both the all-one vector and the all-one matrix are denoted by $\onemat$. 

The spaces $\Gamma_{\alphabet}$ and $\Upsilon_{\alphabet}$ in this article are implicitly endowed with the variational distance defined as follows.
\begin{definition}
    Let $\pv, \tv \in \Gamma_{\alphabet}$ and $\pm, \tm \in \Upsilon_{\alphabet}$. The variational distance between two vectors is given by 
    \[
    \mathtt{d}_v(\pv,\tv)=\max_{S \subseteq \alphabet} |\sum_{a \in S} \pv_a-\tv_a|=\frac{1}{2} \sum_{a \in \alphabet} |\pv_a-\tv_a|,
    \]
    and the variational distance between two matrices is defined as
    \[
    \mathtt{d}_V(\pm,\tm)=\max_{b \in \alphabet} \mathtt{d}_v((\pm_{a, b})_{a \in \alphabet},(\tm_{a, b})_{a \in \alphabet}).
    \]
    Additionally, for any pairs $(\pv,\pm)$ and $(\tv,\tm)$, their variational distance is defined as
    \[
    \mathtt{d}_{v,V}((\pv,\pm),(\tv,\tm))=\max\{\mathtt{d}_v(\pv,\tv), \mathtt{d}_V(\pm,\tm)\}.
    \]
\end{definition}
\noindent For convenience, the product spaces $\Gamma_{\alphabet}^n$, $\Upsilon_{\alphabet}^n$, and $\Gamma_{\alphabet}^n \times \Upsilon_{\alphabet}^n$ are assumed throughout to be equipped with the maximum metrics, which, with a slight abuse of notation, are also denoted by $\mathtt{d}_{v}$, $\mathtt{d}_{V}$, and $\mathtt{d}_{v,V}$, respectively.

\begin{proposition} \label{prop:triangle_inequality}
    Suppose $\pv, \tv \in \Gamma_{\alphabet}$ and $\pm, \tm, \qm \in \Upsilon_{\alphabet}$. Then, the following hold. 
    \begin{itemize}
        \item $\mathtt{d}_v(\pm \pv,\tm \pv) \le \mathtt{d}_V(\pm,\tm)$
        \item $\mathtt{d}_v(\pm \pv,\pm \tv) \le \mathtt{d}_v(\pv,\tv)$
        \item $\mathtt{d}_V(\pm \qm,\tm \qm), \mathtt{d}_V(\qm \pm,\qm \tm) \le \mathtt{d}_V(\pm,\tm)$
    \end{itemize}
\end{proposition}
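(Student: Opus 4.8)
The plan is to verify each of the three inequalities directly from the definitions of $\mathtt{d}_v$ and $\mathtt{d}_V$, exploiting the $\ell^1$-style characterization $\mathtt{d}_v(\pv,\tv)=\frac12\sum_{a}|\pv_a-\tv_a|$ together with the column-stochasticity of the matrices involved. None of the three statements is deep; the main (very mild) obstacle is bookkeeping with the indices and being careful about which margin of each matrix sums to $1$.

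For the first inequality, write $(\pm\pv - \tm\pv)_a = \sum_b (\pm_{a,b}-\tm_{a,b})\pv_b$, so that
\[
\mathtt{d}_v(\pm\pv,\tm\pv) = \tfrac12\sum_a\Bigl|\sum_b(\pm_{a,b}-\tm_{a,b})\pv_b\Bigr| \le \sum_b \pv_b\Bigl(\tfrac12\sum_a|\pm_{a,b}-\tm_{a,b}|\Bigr) = \sum_b \pv_b\,\mathtt{d}_v\bigl((\pm_{a,b})_a,(\tm_{a,b})_a\bigr).
\]
Each summand is bounded by $\mathtt{d}_V(\pm,\tm)$ by definition, and since $\sum_b \pv_b = 1$ the whole sum is $\le \mathtt{d}_V(\pm,\tm)$. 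For the second inequality, the same first step gives $(\pm\pv-\pm\tv)_a = \sum_b \pm_{a,b}(\pv_b-\tv_b)$, hence
\[
\mathtt{d}_v(\pm\pv,\pm\tv) \le \tfrac12\sum_b |\pv_b-\tv_b|\sum_a \pm_{a,b} = \tfrac12\sum_b|\pv_b-\tv_b| = \mathtt{d}_v(\pv,\tv),
\]
where we used $\sum_a \pm_{a,b}=1$ (left stochasticity) to collapse the inner sum. Note that the product $\pm\pv$ is again in $\Gamma_{\alphabet}$ and $\pm\qm$ again in $\Upsilon_{\alphabet}$, so all expressions live in the intended spaces.

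For the third inequality I would treat the two halves separately. For $\mathtt{d}_V(\pm\qm,\tm\qm)$, fix a column index $b$; the $b$-th column of $\pm\qm$ is $\pm(\qm\,\stdvec{b})$ and that of $\tm\qm$ is $\tm(\qm\,\stdvec{b})$, and $\qm\,\stdvec{b}\in\Gamma_{\alphabet}$ since $\qm$ is left stochastic, so the already-proved first inequality gives $\mathtt{d}_v\bigl((\pm\qm)_{\cdot,b},(\tm\qm)_{\cdot,b}\bigr)=\mathtt{d}_v(\pm(\qm\stdvec{b}),\tm(\qm\stdvec{b}))\le\mathtt{d}_V(\pm,\tm)$; taking the max over $b$ yields the claim. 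For $\mathtt{d}_V(\qm\pm,\qm\tm)$, fix a column $b$; the $b$-th columns are $\qm\,\pm_{\cdot,b}$ and $\qm\,\tm_{\cdot,b}$, so the second inequality gives $\mathtt{d}_v(\qm\,\pm_{\cdot,b},\qm\,\tm_{\cdot,b})\le\mathtt{d}_v(\pm_{\cdot,b},\tm_{\cdot,b})\le\mathtt{d}_V(\pm,\tm)$, and again taking the max over $b$ finishes the proof. Thus the only real content is the triangle-inequality-plus-convexity estimate in the first bullet, and the rest follows by specializing it to columns.
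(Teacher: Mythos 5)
Your proof is correct and follows essentially the same route as the paper: the direct $\ell^1$ estimate with column-stochasticity for the first two bullets, and the third bullet obtained by applying those to columns. (If anything, you are slightly more careful than the paper, which writes the triangle-inequality step for the first bullet as an equality.)
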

\begin{proof}
    It is clear that
    \begin{align*}
        \mathtt{d}_v(\pm \pv,\tm \pv)&=\frac{1}{2} \sum_{a \in \alphabet} |\sum_{b \in \alphabet} \pm_{a,b} \pv_b-\tm_{a,b} \pv_b| = \frac{1}{2} \sum_{b \in \alphabet} \pv_b \sum_{a \in \alphabet} |\pm_{a,b} -\tm_{a,b}| \le \mathtt{d}_V(\pm,\tm),
    \end{align*}
    that
    \begin{align*}
        \mathtt{d}_v(\pm \pv,\pm \tv)&=\frac{1}{2} \sum_{a \in \alphabet} |\sum_{b \in \alphabet} \pm_{a,b} \pv_b-\pm_{a,b} \tv_b| = \frac{1}{2} \sum_{b \in \alphabet} |\pv_b -\tv_b| \sum_{a \in \alphabet} \pm_{a,b} \le \mathtt{d}_v(\pv,\tv),
    \end{align*}
    and that the last inequality is a consequence of the former. 
\end{proof}
\noindent As part of our convention, every sequence $\tv, \pv, \tm, \pm$ has initial index 0 unless mentioned otherwise. Furthermore, for $\tv=(\tv[0],\tv[1],\cdots,\tv[k]) \in \Gamma_{\alphabet}^{k+1}$ and $\tm=(\tm[0],\tm[1],\cdots,\tm[k]) \in \Upsilon_{\alphabet}^{k+1}$, we write 
\[
\tv*=(\tv[k],\tv[k-1],\cdots,\tv[0]) \text{ and } \tm*=(\tm[k],\tm[k-1],\cdots,\tm[0]).
\]

\subsection{Pattern distribution} \label{sec:pattern_dist}
The primary goal of this subsection is to introduce the distribution vectors and the transition matrices, which, in essence, capture the frequency of symbol occurrence in a block, and to prepare necessary combinatorial estimates for later discussions.

For $t \in \aprod[d][E]$, the \emph{distribution vector of $t$ on level $n$} is defined as
\begin{equation}
    \dv{n}(t)=\left(\frac{\sum_{g \in \level[d]{n}} \chi_{a}(t_g)}{|\level[d]{n}|}\right)_{a \in \alphabet} \in \Gamma_{\alphabet}
\end{equation}
and $\dvset{n}(\aprod[d][E])$ denotes the set of all distribution vectors on level $n$, i.e.,
\begin{equation}
    \dvset{n}(\aprod[d][E])=\left\{\dv{n}(t): t \in \aprod[d][E]\right\}.
\end{equation}
Additionally, by writing $\childset{g}$ the set of all children of $g$, the \emph{transition matrix of $t$ from level $n$ to $n+1$} is defined as
\begin{equation} \label{eq:transition_matrix_def}
\trm{n}(t)_{a, b}=\begin{cases}
    \left(\frac{\sum_{g \in \level[d]{n}, t_g=b} \sum_{h \in \childset{g}}\chi_{a}(t_{h})}{\sum_{g \in \level[d]{n}, t_g=b} \norm{\childset{g}}}\right) & \text{if } \sum_{g \in \level[d]{n}, t_g=b} \norm{\childset{g}} > 0, \\
    \frac{E_{a, b}}{\sum_{a \in \alphabet} E_{c, b}} & \text{otherwise},
\end{cases}
\end{equation}
and $\trmset{n}(\aprod[d][E])$ stands for the set of all transition matrices:
\begin{equation}
    \trmset{n}(\aprod[d][E])=\left\{\trm{n}(t): t \in \aprod[d][E]\right\}.
\end{equation}
For the sake of convenience, we similarly generalize the notations:
\[
\dv{n}[m](t)=(\dv{n}(t),\cdots, \dv{m}(t)) \text{ and } \dvset{n}[m](\aprod[d][E])=\left\{\dv{n}[m](t): t \in \aprod[d][E]\right\},
\]
\[
\trm{n}[m](t)=(\trm{n}(t),\cdots, \trm{m-1}(t)) \text{ and } \trmset{n}[m](\aprod[d][E])=\left\{\trm{n}[m](t): t \in \aprod[d][E]\right\},
\]
and we denote by $\proddom*{n}[m](\aprod[d][E])$ the set of all compatible pairs $(\tv,\tm) \in \dvset{n}[m](\aprod[d][E]) \times \trmset{n}[m](\aprod[d][E])$, that is,
\begin{align*}
    \proddom*{n}[m](\aprod[d][E])&=\{(\dv{n}[m](t),\trm{n}[m](t)): t \in \aprod[d][E] \} \\
    &=\{(\tv,\tm) \in \dvset{n}(\aprod[d][E]) \times \trmset{n}(\aprod[d][E]): \tv[i+1]=\tm[i] \tv[i] \}.
\end{align*}
Finally, the sets of blocks with given distribution vectors and transition matrices are defined as
\begin{equation*}
    \block{n}[m](\aprod[d][E];\tv)=\{(t_g)_{g \in \lattice[d]{n}[m]}: t \in \aprod[d][E], \dv{n}[m](t)=\tv\},
\end{equation*}
\begin{equation*}
    \block{n}[m](\aprod[d][E];\tv,\tm)=\{(t_g)_{g \in \lattice[d]{n}[m]}: t \in \aprod[d][E], \dv{n}[m](t)=\tv,\trm{n}[m](t)=\tm\},
\end{equation*}
and the partition functions with given distribution vectors and transition matrices are defined as
\begin{equation*}
    \norm{\pblock{n}[m](\aprod[d][E],E;\tv)}=\sum_{u \in \block{n}[m](\aprod[d][E];\tv)} \pblock{n}[m][u,E].
\end{equation*}
\begin{equation*}
    \norm{\pblock{n}[m](\aprod[d][E],E;\tv,\tm)}=\sum_{u \in \block{n}[m](\aprod[d][E];\tv,\tm)} \pblock{n}[m][u,E].
\end{equation*}
For conciseness, we suppress the notation $\aprod[d][E]$ and $d$ whenever no ambiguity should occur. It turns out that, by a standard argument in large deviation theory, the size of these sets merely has a sub-exponential growth rate with respect to $\norm{\lattice{n}[m]}$, as is shown in the following proposition.
\begin{proposition} \label{prop:dist_set_growth_rate}
    For any $n \le m$, 
    \begin{gather*} 
        1 \le \norm{\dvset{n}[m]} \le \prod_{i=n}^{m} \left(\norm{\level{i}}+1\right)^{\norm{\alphabet}} \le \left(\frac{\norm{\lattice{n}[m]}}{m-n+1} + 1\right)^{(m-n+1) \cdot \norm{\alphabet}}, \\
        1 \le \norm{\trmset{n}[m]} \le \prod_{i=n}^{m} \left(\norm{\level{i}}+1\right)^{\norm{\alphabet} (\norm{\alphabet}+1)} \le \left(\frac{\norm{\lattice{n}[m]}}{m-n+1} + 1\right)^{2 (m-n+1) \cdot \norm{\alphabet}^2}.
    \end{gather*}
    In addition, $\lim_{\norm{\lattice{n}[m]} \to \infty} (m-n)/\norm{\lattice{n}[m]}=0$.
\end{proposition}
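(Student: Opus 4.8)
The plan is to establish the three displayed two-sided estimates and the final limit by elementary cardinality counting together with the AM--GM inequality; no genuine large-deviation input is needed. Write $k=\norm{\alphabet}$ and recall $\norm{\level{i}}=d^i$, $\norm{\lattice{n}[m]}=\sum_{i=n}^{m}d^i$. The lower bounds $\norm{\dvset{n}[m]}\ge 1$ and $\norm{\trmset{n}[m]}\ge 1$ reduce to $\aprod[d][E]\ne\emptyset$: assumption \eqref{eq:assumption} makes every column of $E$ nonzero, so any symbol extends forward to an $E$-admissible one-sided sequence $x=(x_0,x_1,\dots)$, and then the configuration that is constant on each level of $\tree[d]$, equal to $x_i$ on level $i$, lies in $\aprod[d][E]$ because every suffix of $x$ is $E$-admissible.

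For the first nontrivial inequality in each chain I would count level by level, using $\dvset{n}[m]\subseteq\prod_{i=n}^{m}\dvset{i}$ and $\trmset{n}[m]\subseteq\prod_{i=n}^{m-1}\trmset{i}$. Each $\dv{i}(t)$ is a probability vector whose entries are integer multiples of $1/\norm{\level{i}}$, so $\norm{\dvset{i}}\le(\norm{\level{i}}+1)^{k}$, and multiplying over $i=n,\dots,m$ gives the bound for $\norm{\dvset{n}[m]}$. For the transition matrices, \eqref{eq:transition_matrix_def} shows that $\trm{i}(t)$ is a deterministic function (given the fixed matrix $E$) of the nonnegative integer matrix $C^{(i)}$ whose $(a,b)$-entry counts the pairs $g\in\level{i}$, $h\in\childset{g}$ with $t_g=b$, $t_h=a$: indeed the numerator and denominator in \eqref{eq:transition_matrix_def} are exactly $C^{(i)}_{a,b}$ and $\sum_{c}C^{(i)}_{c,b}$, and the ``otherwise'' branch (triggered precisely when a column of $C^{(i)}$ vanishes) is well defined by \eqref{eq:assumption}. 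Since $\sum_{a,b}C^{(i)}_{a,b}=\norm{\level{i+1}}$, each of the $k^2$ entries lies in $\{0,1,\dots,\norm{\level{i+1}}\}$, so $\norm{\trmset{i}}\le(\norm{\level{i+1}}+1)^{k^2}$; multiplying over $i=n,\dots,m-1$, reindexing $i\mapsto i+1$, restoring the index set to $\{n,\dots,m\}$ (the extra factor $\norm{\level{n}}+1\ge1$ only increases the bound), and enlarging the exponent from $k^2$ to $k(k+1)$ yields the bound for $\norm{\trmset{n}[m]}$.

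The final inequality in each chain is AM--GM: for positive reals $a_n,\dots,a_m$ one has $\prod_{i=n}^{m}a_i\le\left(\frac{1}{m-n+1}\sum_{i=n}^{m}a_i\right)^{m-n+1}$. Taking $a_i=\norm{\level{i}}+1$ and using $\sum_{i=n}^{m}(\norm{\level{i}}+1)=\norm{\lattice{n}[m]}+(m-n+1)$ turns $\prod_{i=n}^{m}(\norm{\level{i}}+1)^{c}$ into $\left(\frac{\norm{\lattice{n}[m]}}{m-n+1}+1\right)^{c(m-n+1)}$; with $c=k$ this is exactly the last member of the chain for $\norm{\dvset{n}[m]}$, and with $c=k(k+1)$, after the harmless bound $k(k+1)\le 2k^2$, it gives that of the chain for $\norm{\trmset{n}[m]}$. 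For the last assertion, $\lattice{n}[m]\supseteq\level{m}$ gives $\norm{\lattice{n}[m]}\ge d^{m}\ge 2^{m}$ (here $d\ge 2$); since $\norm{\lattice{n}[m]}$ is bounded whenever $m$ is, $\norm{\lattice{n}[m]}\to\infty$ forces $m\to\infty$, and then $0\le (m-n)/\norm{\lattice{n}[m]}\le m/2^{m}\to 0$.

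The only step calling for real care --- and thus the main (if mild) obstacle --- is the bookkeeping in the transition-matrix count: because $\trm{i}(t)$ encodes level-$(i+1)$ data, the honest per-level factor is controlled by $\norm{\level{i+1}}$ rather than $\norm{\level{i}}$, and one must check that the resulting index shift is absorbed at no cost into the stated index range and exponent $k(k+1)$. Everything else is routine.
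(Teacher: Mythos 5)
Your proof is correct and follows essentially the same route as the paper: per-level counting of the finitely many rational distribution vectors and count-determined transition matrices, followed by the AM--GM inequality and the exponential growth of $\norm{\level{m}}$ for the final limit. The only (harmless) difference is that you count the joint parent--child count matrices $C^{(i)}$ directly, getting the slightly sharper per-level exponent $\norm{\alphabet}^2$, whereas the paper's two-stage count (column sums first, then per-column numerators) gives $\norm{\alphabet}(\norm{\alphabet}+1)$; both are then absorbed into the stated bound.
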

\begin{proof}
    The inequalities essentially follow from a simple fact: given any $k \in \mathbb{N}$, the size of the set $I_{k,\ell}:=\{(\frac{a_i}{k})_{1 \le i \le \ell}: a_i \in \mathbb{Z}_+, \sum_{i=1}^{\ell} a_i = k\}$ is no more than $(k+1)^\ell$. 
    
    For the set $\dvset{n}[m]$, the first inequality is trivial, and the second follows from that $\dvset{i} \subset I_{\norm{\level{i}},\norm{\alphabet}}$ and that $\norm{\dvset{n}[m]} \le \prod_{i=n}^{m} \norm{\dvset{i}}$. The third is a consequence of the inequality of arithmetic and geometric means.
    
    For the set $\trmset{n}[m]$, the first inequality is again trivial. For the second inequality, the set of vectors $\Lambda_i=\{(\sum_{g \in \level{i}, t_g=b} \norm{\childset{g}})_{b \in \alphabet}: t \in \aprod[d][E]\}$ is contained in the set $\norm{\level{i+1}} \cdot I_{\norm{\level{i+1}},\norm{\alphabet}}$. Consequently,
    \begin{align*}
        \norm{\trmset{i}} & \le \sum_{(\sum_{g \in \level{i}, t_g=b} \norm{\childset{g}})_{b \in \alphabet} \in \Lambda_i} \prod_{b \in \alphabet} \max \left\{\norm{I_{\sum_{g \in \level{i}, t_g=b} \norm{\childset{g}},\norm{\alphabet}}},1\right\} \\
        & \le \sum_{(\sum_{g \in \level{i}, t_g=b} \norm{\childset{g}})_{b \in \alphabet} \in \Lambda_i} \left(\norm{\level{i+1}}+1\right)^{\norm{\alphabet}^2} \\
        & \le \left(\norm{\level{i+1}}+1\right)^{\norm{\alphabet} (\norm{\alphabet}+1)}.
    \end{align*}
    The rest of the argument follows a similar line of reasoning.

    Finally, the limit of the ratio holds true as a consequence of the exponential growth of $\norm{\level{n}}$.
\end{proof}

\section{Topological pressure and pattern distribution} \label{sec:combinatorial_optimization}
This section explains the relationship between topological pressure and pattern distribution, laying the groundwork for the proof of Theorem \ref{Thm: 1}. Before diving into the exposition, we first outline the general idea. The core concept behind distribution vectors and transition matrices is simple: any two blocks $u, v \in \block{n}[m](\tv,\tm)$ have the same weight. Thus, by keeping track of $\norm{\block{n}[m](\tv,\tm)}$, we gain a clearer understanding of the partition function. The road map of our approach is as below.
\begin{description}
    \item[(Step 1)] Applying Lemma \ref{prop:dist_set_growth_rate}, we establish
    \[
    \lim_{n \to \infty} \frac{\log \norm{\pblock{n}(\aprod[d][E],E)}}{\norm{\lattice[d]{n}}} = \lim_{k \to \infty} \lim_{n \to \infty} \max_{(\tv,\tm) \in \proddom*{n}[n+k]} \frac{\log \norm{\pblock{n}[n+k](\aprod[d][E],E;\tv,\tm)}}{\norm{\lattice[d]{n}[n+k]}}.
    \]
    \item[(Step 2)] Instead of solving the maximization problem on the right-hand side, we relate it to a more tractable optimization problem \eqref{eq:limiting_optimization}, whose maximum $P^{(k)}(d,E)$ satisfies
    \[
    \lim_{k \to \infty} \lim_{n \to \infty} \max_{(\tv,\tm) \in \proddom*{n}[n+k]} \frac{\log \norm{\pblock{n}[n+k](\aprod[d][E],E;\tv,\tm)}}{\norm{\lattice[d]{n}[n+k]}} = \lim_{k \to \infty} P^{(k)}(d,E).
    \]
    \item[(Step 3)] Determine $P^{(k)}(d,E)$ for $d \in (1,\infty)$.
    \item[(Step 4)] Prove that $P^{(\infty)}(d,E) = \lim_{k \to \infty} P^{(k)}(d,E)$ and show that the function, defined for $d \in (1,\infty)$, is continuous and increasing.
\end{description}
Our argument is deeply inspired by the combinatorial proof of Cram\'er's theorem in large deviation theory (see, for example, \cite[Chapter 2]{Dembo2010a}), allowing us to apply the well-established techniques therein.

\subsection{Step 1}
This subsection studies the partition function as a function of distribution vectors and transition matrices. Noting that $\block{n}[m]$ is a disjoint union of $\block{n}[m](\tv,\tm)$ ($(\tv,\tm) \in \proddom*{n}[m]$), we may bound the partition function by
\begin{equation} \label{eq:decomp_inequality}
    \max_{\tv \in \dvset{n}[m]} \norm{\pblock{n}[m](\tv,\tm)} \le \norm{\pblock{n}[m]} \le \norm{\dvset{n}[m]} \cdot \max_{\tv \in \dvset{n}[m]} \norm{\pblock{n}[m](\tv,\tm)}.
\end{equation}
Therefore, for any unbounded increasing sequence $(\lattice{n_i}[m_i])_{i \in \mathbb{N}}$, i.e., a sequence satisfying that $\norm{\lattice{n_i}[m_i]} \le \norm{\lattice{n_{i+1}}[m_{i+1}]}$ and that $\lim_{i \to \infty} \norm{\lattice{n_i}[m_i]} = \infty$, it follows from Proposition \ref{prop:dist_set_growth_rate} that
\begin{equation} \label{eq:decomp_liminf}
    \liminf_{i \to \infty} \frac{\log \norm{\pblock{n_i}[m_i]}}{\norm{\lattice{n_i}[m_i]}} = \liminf_{i \to \infty} \max_{\tv \in \dvset{n_i}[m_i]} \frac{\log \norm{\pblock{n_i}[m_i](\tv,\tm)}}{\norm{\lattice{n_i}[m_i]}},
\end{equation}
\begin{equation} \label{eq:decomp_limsup}
    \limsup_{i \to \infty} \frac{\log \norm{\pblock{n_i}[m_i]}}{\norm{\lattice{n_i}[m_i]}} = \limsup_{i \to \infty} \max_{\tv \in \dvset{n_i}[m_i]} \frac{\log \norm{\pblock{n_i}[m_i](\tv,\tm)}}{\norm{\lattice{n_i}[m_i]}}.
\end{equation}
Following from a similar argument to \cite[Theorem 2.1]{Petersen2018b}, the upper and lower limits are known to coincide in the following two essential cases.
\begin{lemma}
    Let $E$ satisfy assumption \eqref{eq:assumption} and $(\lattice{n_i}[m_i])_{i \in \mathbb{N}}$ be an unbounded increasing sequence. Then,
    \begin{itemize}
        \item If $m_i - n_i = k$ for all $i$, then \eqref{eq:decomp_liminf} equals \eqref{eq:decomp_limsup}.
        \item If $\lim_{i \to \infty} m_i - n_i = \infty$, then both \eqref{eq:decomp_liminf} and \eqref{eq:decomp_limsup} coincide with $\mathbf{P}$.
    \end{itemize}
\end{lemma}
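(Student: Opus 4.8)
The plan is to sidestep the maximization on the right-hand sides of \eqref{eq:decomp_liminf}--\eqref{eq:decomp_limsup} and instead control the bare ratio $\log\norm{\pblock{n}[m]}/\norm{\lattice{n}[m]}$ directly, exploiting that $\lattice{n}[m]$ is a disjoint union of $d^{n}$ copies of $\lattice{0}[m-n]$; since \eqref{eq:decomp_liminf}--\eqref{eq:decomp_limsup} are already in hand, whatever we prove about this ratio transfers to both sides. The first step is a subtree-wise description of the partition function. For $b\in\alphabet$ and $h\ge 0$, let $\zeta_h(b)$ be the sum over all locally admissible colorings $u$ of a height-$h$ $d$-subtree rooted at a vertex carrying $b$ of $\prod_{g}\prod_{i=1}^{d}E_{u_{gf_i},u_g}$, the product running over internal vertices $g$; then $\zeta_0\equiv 1$, $\zeta_{h+1}(b)=\big(\sum_{a}E_{a,b}\zeta_h(a)\big)^{d}$, each $\zeta_h(b)>0$ by \eqref{eq:assumption}, and $\norm{\pblock{0}[h]}=\sum_{b}w_b\zeta_h(b)$. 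Writing $\mathcal{E}_n\subseteq\alphabet^{\level{n}}$ for the set of colorings of level $n$ that extend to an admissible coloring of all of $\lattice{0}[n]$, the identity I would establish is
\[
\norm{\pblock{n}[m]}=\sum_{\ell\in\mathcal{E}_n}\ \prod_{v\in\level{n}}w_{\ell(v)}\,\zeta_{m-n}(\ell(v)),\qquad m\ge n.
\]
Proving this is the one point requiring care: from the Markov structure and both halves of \eqref{eq:assumption} one checks that a locally admissible block on $\lattice{n}[m]$ is the restriction of a point of $\aprod[d][E]$ exactly when its level-$n$ coloring lies in $\mathcal{E}_n$ --- extending downward is automatic because every column of $E$ is nonzero, while extending upward is the definition of $\mathcal{E}_n$ --- after which disjointness of the $d^{n}$ subtrees below level $n$ and the recursion for $\zeta$ yield the displayed sum.

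From this identity I would extract only two facts. \emph{Submultiplicativity:} a coloring in $\mathcal{E}_{n+1}$ restricts to a coloring in $\mathcal{E}_n$ on each of the $d$ copies of $\lattice{0}[n]$ hanging below the root, so regrouping the product over $\level{n+1}$ gives $\norm{\pblock{n+1}[m+1]}\le\norm{\pblock{n}[m]}^{d}$, hence $\norm{\pblock{n}[m]}\le\norm{\pblock{0}[m-n]}^{d^{n}}$ by iteration. \emph{Lower bound:} by \eqref{eq:assumption} every symbol has a valid parent, so ``constant on each level'' shows that for each $b$ the constant-$b$ coloring of $\level{n}$ belongs to $\mathcal{E}_n$; hence $\norm{\pblock{n}[m]}\ge\max_b\big(w_b\zeta_{m-n}(b)\big)^{d^{n}}\ge\norm{\alphabet}^{-d^{n}}\norm{\pblock{0}[m-n]}^{d^{n}}$, using $\norm{\pblock{0}[h]}=\sum_b w_b\zeta_h(b)\le\norm{\alphabet}\,\max_b w_b\zeta_h(b)$. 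Since $\norm{\lattice{n}[m]}=d^{n}\norm{\lattice{0}[m-n]}$, dividing logarithms gives, for all $n\le m$,
\[
\frac{\log\norm{\pblock{0}[m-n]}}{\norm{\lattice{0}[m-n]}}-\frac{\log\norm{\alphabet}}{\norm{\lattice{0}[m-n]}}\ \le\ \frac{\log\norm{\pblock{n}[m]}}{\norm{\lattice{n}[m]}}\ \le\ \frac{\log\norm{\pblock{0}[m-n]}}{\norm{\lattice{0}[m-n]}}.
\]

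Both assertions now follow quickly. For the first, fix $k$ and put $a_n=\log\norm{\pblock{n}[n+k]}$: submultiplicativity gives $a_{n+1}\le d\,a_n$, so $a_n/d^{n}$ is non-increasing, while the lower bound gives $a_n/d^{n}\ge\log\max_b w_b\zeta_k(b)>-\infty$, so $a_n/d^{n}$ converges. The hypotheses on $(\lattice{n_i}[m_i])_i$ force the $n_i$ to be non-decreasing and unbounded, whence $\log\norm{\pblock{n_i}[m_i]}/\norm{\lattice{n_i}[m_i]}=(a_{n_i}/d^{n_i})/\norm{\lattice{0}[k]}$ converges, and its limit is simultaneously \eqref{eq:decomp_liminf} and \eqref{eq:decomp_limsup}. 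For the second, $\mathbf{P}=\lim_{h\to\infty}\log\norm{\pblock{0}[h]}/\norm{\lattice{0}[h]}$ exists by \cite[Theorem~3.10]{Petersen2021}; since $m_i-n_i\to\infty$, both ends of the displayed sandwich tend to $\mathbf{P}$, so the middle does too, giving the claim. This is the Fekete-type scheme of \cite[Theorem~2.1]{Petersen2018b}, the new feature being that the submultiplicativity here is exact, so for fixed $k$ mere monotonicity of $a_n/d^{n}$ suffices.

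The step I expect to be the main obstacle is the bookkeeping identity above --- more precisely, handling upward-extendability without describing $\mathcal{E}_n$ explicitly. A priori the sets $\mathcal{E}_n$ could be awkward, but only two features are used: their stability under passage to the $d$ root-subtrees, and the fact that they always contain constant colorings; both are immediate from \eqref{eq:assumption} and the self-similarity of $\tree[d]$.
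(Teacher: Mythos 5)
Your proof is correct, and while it follows the same Fekete-type sandwich skeleton as the paper (the paper likewise uses $\norm{\pblock{i+1}[i+k+1]} \le \norm{\pblock{i}[i+k]}^d$ to get a decreasing normalized sequence for the first bullet and an upper bound against $\mathbf{P}$ for the second), your lower bound is obtained by a genuinely different mechanism. The paper works at the level of the typed partition functions: it takes a maximizer $(\tv,\tm) \in \proddom*{0}[k]$, observes that $\tv[0]$ is a point mass $\stdvec{a}$, plants blocks of that type at every vertex of $\level{i}$, and uses the row-positivity half of \eqref{eq:assumption} to build a path $b_0,\dots,b_i=a$ connecting them to the root, yielding $\max_{(\tv,\tm)}\norm{\pblock{i}[i+k](\tv,\tm)} \ge \max_{(\tv,\tm)}\norm{\pblock{0}[k](\tv,\tm)}^{d^i}$ and hence $\liminf \ge \mathbf{P}$ via Proposition \ref{prop:dist_set_growth_rate}. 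You instead prove an exact product formula for the untyped quantity, $\norm{\pblock{n}[m]} = \sum_{\ell \in \mathcal{E}_n}\prod_{v}w_{\ell(v)}\zeta_{m-n}(\ell(v))$, restrict to constant level-$n$ colorings (using the same row-positivity of $E$ to guarantee every symbol has a parent, so constant colorings lie in $\mathcal{E}_n$), and finish with the pigeonhole bound $\max_b w_b\zeta_h(b) \ge \norm{\pblock{0}[h]}/\norm{\alphabet}$; the typed statements then follow automatically because \eqref{eq:decomp_liminf}--\eqref{eq:decomp_limsup} are already established. What your route buys is that it makes the paper's ``it is not hard to see'' submultiplicativity explicit, avoids the typed machinery and the path construction entirely, and gives a clean two-sided sandwich between $\log\norm{\pblock{n}[m]}/\norm{\lattice{n}[m]}$ and $\log\norm{\pblock{0}[m-n]}/\norm{\lattice{0}[m-n]}$; what the paper's route buys is a reversed inequality stated directly for the typed partition functions, which is closer in form to what it manipulates in Step 2. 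Two minor points to keep in mind: your lower bound implicitly needs $\max_b w_b\zeta_h(b)>0$, i.e.\ $w$ not identically zero (the same implicit positivity the paper needs to take logarithms), and in the first bullet you should note, as you do, that only $n_i \to \infty$ is actually used, which follows from unboundedness once $m_i-n_i$ is constant.
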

\begin{proof}
    For short, we denote $a_i = \log \norm{\pblock{n_i}[m_i]}/\norm{\lattice{n_i}[m_i]}$. 

    To prove the first claim, we may assume without loss of generality that $m_i = n_i + k = i + k$, since the remaining cases follow naturally from this special case since $n_i \to \infty$. Under this assumption, it is not hard to see from the definition that 
    \begin{equation} \label{eq:decreasing_sequence}
        \norm{\pblock{i+1}[i+k+1]} \le \norm{\pblock{i}[i+k]}^d \text{ and } \norm{\lattice{i+1}[i+k+1]} = d \cdot \norm{\lattice{i}[i+k]} \quad \text{for all } i. 
    \end{equation}
    Hence, $a_i$ is decreasing and, in particular, convergent.

    For the second claim, it is readily checked, following from \eqref{eq:decreasing_sequence}, that 
    \[
    \limsup_{i \to \infty} a_i \le \limsup_{i \to \infty} \frac{\log \norm{\pblock{m_i-n_i}}}{\norm{\lattice{m_i-n_i}}} = \mathbf{P},
    \]
    and thus it remains to prove $\liminf_{i \to \infty} a_i = \mathbf{P}$. To this end, we show a reversed inequality of \eqref{eq:decreasing_sequence}:
    \[
    \max_{(\tv,\tm) \in \proddom*{i}[i+k]} \norm{\pblock{i}[i+k](\tv,\tm)} \ge \max_{(\tv,\tm) \in \proddom*{0}[k]} \norm{\pblock{0}[k](\tv,\tm)}^{d^i} \quad \text{for all } i,
    \]
    immediately yielding
    \[
    \liminf_{i \to \infty} a_i \ge \liminf_{i \to \infty} \frac{\log \norm{\pblock{m_i-n_i}(\tv,\tm)}}{\norm{\lattice{m_i-n_i}}} = \mathbf{P}.
    \]
    To prove inequality, assume $(\tv,\tm) \in \proddom*{0}[k]$ is a maximizer of the right-hand side and $u^{(g)} \in \proddom*{0}[k](\tv,\tm)$, $g \in \level{i}$. Under the circumstances, $\tv[0]=\stdvec{a}$ for some $a \in \alphabet$, implying that $u^{(j)}_{\epsilon} = a$ for all $g \in \level{i}$. Due to assumption \eqref{eq:assumption}, there exist $b \in \alphabet^{i+1}$ with $E_{b_{\ell+1},b_{\ell}} > 0$ for $0 \le \ell < i$ and $b_i=a$, and thus a block $v \in \proddom*{i}[i+k](\tv,\tm)$ defined as
    \[
    v_g = \begin{cases}
        b_{\ell} & \text{if } g \in \level{\ell} \text{ for } \ell \le i, \\
        u^{(g')}_{g''} & \text{if } g = g' g'', g' \in \level{i}.
    \end{cases}
    \]
    As a result,
    \begin{equation*}
        \norm{\pblock{i:i+k}(\tv,\tm)} = \sum_{v \in \proddom*{i}[i+k](\tv,\tm)} {\pblock{}}[v,E] \ge \prod_{g \in \level{i}} \sum_{u^{(g)} \in \proddom*{0}[k](\tv,\tm)} {\pblock{}}[u^{(g)},E] = \norm{\pblock{0:k}(\tv,\tm)}^{d^{i}}
    \end{equation*}
    proving the desired result.
\end{proof}
An immediate consequence of the lemma above is
\begin{equation} \label{eq:decomp_coincidence_unbounded}
    \mathbf{P} = \lim_{k \to \infty} \lim_{n \to \infty} \max_{\tv \in \proddom*{n}[n+k]} \frac{\log \norm{\pblock{n}[n+k](\tv)}}{\norm{\lattice{n}[n+k]}},
\end{equation}
which leads to the investigation of the following optimization problem: 
\begin{equation} \label{eq:original_optimization}
    \begin{cases}
        \text{maximize} & \frac{\log \norm{\pblock{n}[n+k](\tv,\tm)}}{\norm{\lattice{n}[n+k]}} \\
        \text{subject to} & (\tv,\tm) \in \proddom*{n}[n+k]
    \end{cases}
    \tag{Problem 2}
\end{equation}
\subsection{Step 2} \label{sec:step_2}
Solving the original optimization problem \eqref{eq:original_optimization} is challenging due to its combinatorial nature. Thus, we take the following steps to transform the problem into a regular optimization problem without changing the limiting behavior of the maximum:
\begin{enumerate}
    \item Rephrase the objective function.
    \item Extend the feasible domain to a convex set of Euclidean space.
\end{enumerate}
To achieve it, we first approximate the objective function using Stirling's approximation \eqref{eq:original_optimization}. Before we demonstrate this, we first define the ``Kullback-Leibler divergence'' vector. Let $\tm \in \Upsilon_{\alphabet}$ and $M$ be a non-negative matrix of the same dimension and $\tm_{a,b}=0$ if $M_{a,b}=0$. We define such a vector as
\[
\DKL(\tm || M)_b:=\sum_{a \in \alphabet} \tm_{a, b} \log \frac{\tm_{a, b}}{M_{a, b}},
\]
where $0 \log \frac{0}{0}$ is interpreted as $0$. We should stress that $M$ is not necessarily a stochastic matrix, and so $\DKL(\tm || M)$ needs not be non-negative.
\begin{proposition}
    Let $(\tv, \tm) \in \proddom{n}[m]$. Then,
    \begin{align*}
        \frac{\log \norm{\pblock{n}[m](\tv,\tm)}}{\norm{\lattice{n}[m]}} &= \frac{\log \norm{\pblock{n}[n](\tv[0])}}{\norm{\lattice{n}[m]}} - \sum_{j=0}^{m-n-1} \frac{\norm{\level{n+j+1}}}{\norm{\lattice{n}[m_i]}} \DKL(\tm^{(j)} || E)^T \tv^{(j)} \\
        & \quad + O\left(\frac{\log \norm{\lattice{n}[m]}}{\norm{\lattice{n}[m]}}\right).
    \end{align*}
\end{proposition}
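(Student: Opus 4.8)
The plan is to count blocks with prescribed distribution vectors and transition matrices explicitly, then estimate the resulting multinomial coefficients via Stirling's formula. Fix $(\tv,\tm) \in \proddom{n}[m]$ and observe that, because all blocks in $\block{n}[m](\tv,\tm)$ share the same weight, we have $\norm{\pblock{n}[m](\tv,\tm)} = \norm{\block{n}[m](\tv,\tm)} \cdot W$, where $W$ is the common weight of any single block $u$ in this set. Since $u$ has exactly $\norm{\level{n+j}} \tv[j]_a$ vertices labelled $a$ on level $n+j$, and each edge from level $n+j$ to level $n+j+1$ issuing from a vertex labelled $b$ to one labelled $a$ contributes a factor $E_{a,b}$, the weight factors as $W = \prod_{a} w_a^{\norm{\level{n}}\tv[0]_a} \cdot \prod_{j=0}^{m-n-1} \prod_{a,b} E_{a,b}^{\,\norm{\level{n+j+1}} \tv[j]_b \tm[j]_{a,b}}$, using that the number of such edges is $\norm{\level{n+j+1}} \tv[j]_b \tm[j]_{a,b}$ (each of the $\norm{\level{n+j}}\tv[j]_b$ parent vertices of type $b$ has $d$ children, of which a fraction $\tm[j]_{a,b}$ carry label $a$, and $d \norm{\level{n+j}} = \norm{\level{n+j+1}}$). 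Taking logarithms, the $w$-term is exactly $\log \norm{\pblock{n}[n](\tv[0])}$, and the $E$-term contributes $\sum_{j} \norm{\level{n+j+1}} \sum_{a,b} \tv[j]_b \tm[j]_{a,b} \log E_{a,b}$.

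Next I would count $\norm{\block{n}[m](\tv,\tm)}$. A block is built level by level: fixing the labelling on level $n+j$, the number of ways to extend to level $n+j+1$ consistently with $\tm[j]$ is, for each parent vertex of type $b$, a multinomial coefficient $\binom{d}{(d\,\tm[j]_{a,b})_{a}}$, and there are $\norm{\level{n+j}}\tv[j]_b$ such parents; on level $n$ itself there is a multinomial $\binom{\norm{\level{n}}}{(\norm{\level{n}}\tv[0]_a)_a}$ for the initial labelling. Hence $\log \norm{\block{n}[m](\tv,\tm)}$ equals $\log \binom{\norm{\level{n}}}{(\norm{\level{n}}\tv[0]_a)_a} + \sum_{j=0}^{m-n-1} \sum_{b} \norm{\level{n+j}}\tv[j]_b \log \binom{d}{(d\,\tm[j]_{a,b})_a}$. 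Applying Stirling's approximation $\log N! = N \log N - N + O(\log N)$ to each factorial, the level-$n$ multinomial is absorbed into $\log \norm{\pblock{n}[n](\tv[0])}$ up to an $O(\log\norm{\level{n}})$ error, while each child-multinomial satisfies $\log \binom{d}{(d\,\tm[j]_{a,b})_a} = -d \sum_a \tm[j]_{a,b} \log \tm[j]_{a,b} + O(\log d)$; multiplying by $\norm{\level{n+j}}\tv[j]_b$ and summing over $b$ gives $-\norm{\level{n+j+1}} \sum_{a,b} \tv[j]_b \tm[j]_{a,b} \log \tm[j]_{a,b}$ plus an error $O(\norm{\level{n+j}} \log d)$.

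Combining the weight contribution and the counting contribution, the $\log E_{a,b}$ and $-\log \tm[j]_{a,b}$ terms merge into $\log(E_{a,b}/\tm[j]_{a,b})$, producing $-\norm{\level{n+j+1}} \sum_{a,b} \tv[j]_b \tm[j]_{a,b} \log(\tm[j]_{a,b}/E_{a,b}) = -\norm{\level{n+j+1}} \DKL(\tm[j] \| E)^T \tv[j]$, exactly the claimed main term once divided by $\norm{\lattice{n}[m]}$. Finally I would bound the accumulated error: the per-level errors $O(\norm{\level{n+j}}\log d)$ and $O(\log\norm{\level{n+j}})$, summed over $j$ from $0$ to $m-n-1$ and divided by $\norm{\lattice{n}[m]} \ge \norm{\level{m}}$, are each $O(\log \norm{\lattice{n}[m]}/\norm{\lattice{n}[m]})$, using the exponential growth of $\norm{\level{i}}$ to control the geometric sum and the bound $\log d = O(\log\norm{\lattice{n}[m]})$. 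The main obstacle is bookkeeping: making the Stirling error term genuinely uniform — in particular handling vanishing entries $\tm[j]_{a,b} = 0$ (where the corresponding $E_{a,b} = 0$ by compatibility, and the convention $0\log(0/0) = 0$ keeps both the multinomial coefficient and the KL term well-defined) and checking that the $O(\log d)$ in the child-multinomial estimate is legitimate when $d \tm[j]_{a,b}$ ranges over integers in $\{0,\dots,d\}$ — so that the collected remainder really collapses to a single $O(\log \norm{\lattice{n}[m]}/\norm{\lattice{n}[m]})$.
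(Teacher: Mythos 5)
Your overall strategy---factor $\norm{\pblock{n}[m](\tv,\tm)}$ as (number of blocks) times (common weight) and estimate via Stirling---is exactly the paper's, and your computation of the common weight is correct. The gap is in your count of $\norm{\block{n}[m](\tv,\tm)}$. By the definition \eqref{eq:transition_matrix_def}, the transition matrix $\trm{n+j}(t)$ records only the \emph{level-aggregate} frequency: it constrains the total number of type-$a$ children over \emph{all} type-$b$ parents on level $n+j$, not the composition of each individual parent's $d$ children. Hence blocks in which different type-$b$ parents have different child compositions (with the correct level totals) do lie in $\block{n}[m](\tv,\tm)$ but are excluded by your product of per-parent coefficients $\binom{d}{(d\,\tm[j]_{a,b})_a}$; moreover $d\,\tm[j]_{a,b}$ need not even be an integer, so these coefficients are generally ill-defined. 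The undercount is exponential: for $d=2$, two symbols and $\tm[j]_{a,b}=\tfrac12$, your scheme allows $2^{N/2}$ labelings of the $N$ children of type-$b$ parents, while the true number is $\binom{N}{N/2}\approx 2^{N}$.

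This is not a harmless bookkeeping issue, and the obstacle you flag at the end is real and fatal to your error estimate: the per-parent Stirling step carries an error $O(\log d)$ \emph{per parent}, i.e.\ $O(\norm{\level{n+j}}\log d)$ per level, and summing over $j$ gives a quantity of the same order as $\norm{\lattice{n}[m]}$ itself (after dividing, a non-vanishing constant of order $\log d/d$, precisely the signature of the exponential undercount). Since the proposition is later used to identify the pressure exactly, the remainder must vanish, so this cannot be absorbed into $O\bigl(\log \norm{\lattice{n}[m]}/\norm{\lattice{n}[m]}\bigr)$. The repair is the paper's count: for each level $n+j$ and each $b\in\alphabet$, label the $\norm{\level{n+j+1}}\,\tv[j]_b$ children of all type-$b$ parents jointly, which gives the single multinomial coefficient $\binom{\norm{\level{n+j+1}}\tv[j]_b}{\bigl(\norm{\level{n+j+1}}\tv[j]_b\,\tm[j]_{a,b}\bigr)_{a\in\alphabet}}$ per level and per symbol, whose Stirling error is only $O(\log\norm{\level{n+j+1}})$; those errors, summed and divided by $\norm{\lattice{n}[m]}$, are indeed $O\bigl(\log \norm{\lattice{n}[m]}/\norm{\lattice{n}[m]}\bigr)$. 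With that replacement, your merging of the $\log E_{a,b}$ and $-\log\tm[j]_{a,b}$ contributions into $-\,\DKL(\tm[j]\,||\,E)^T\tv[j]$ goes through as you describe.
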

\begin{proof}
    Recall that Stirling's approximation gives
    \[
    \ln(n!) = n \log n - n + O(\ln n).
    \] 
    A combinatorial argument shows that 
    \begin{align*}
        \norm{\pblock{n}[m](\tv,\tm)}=&\norm{\pblock{n}[n](\tv[0])} \prod_{j=0}^{m-n-1} \prod_{b \in \alphabet} \left[ \binom{\norm{\level{n+j+1}} \cdot \tv[j]_b}{\norm{\level{n+j+1}} \cdot \tv[j]_b \cdot \tm[j]_{a, b}} \cdot \prod_{a \in \alphabet} E_{a,b}^{\norm{\level{n+j+1}} \cdot \tv[j]_b \cdot \tm[j]_{a, b}} \right],
    \end{align*}
    where $\binom{a}{b_1,b_2,\cdots,b_k} = \frac{a!}{b_1!b_2! \cdots b_k!}$ is the multinomial coefficient, which according to Stirling's approximation can be expressed as
    \[
    \log \binom{a}{b_1,b_2,\cdots,b_k} = - a \sum_{i=1}^k \frac{b_i}{a} \log \frac{b_i}{a} + O(\log a)
    \]
    Essentially, the multinomial coefficient in the expression of $\norm{\pblock{n}[m](\tv,\tm)}$ simply counts the blocks which have $\norm{\level{n+j+1}} \cdot \tv[j]_b$ type $b$ particles on level $n+j$ that produce $\norm{\level{n+j+1}} \cdot \tv[j]_b \cdot \tm[j]_{a, b}$ type $a$ particles on level $n+j+1$. Therefore,
    \begin{align*}
        & \frac{\log \norm{\pblock{n}[m](\tv,\tm)}}{\norm{\lattice{n}[m]}} \\
        = & \frac{\log{\norm{\pblock{n}(\tv[0])}}}{\norm{\lattice{n}[m]}}  \\
        & \hspace{1em} + \sum_{j=0}^{m-n-1} \frac{\norm{\level{n+j+1}}}{\norm{\lattice{n}[m]}} \sum_{b \in \alphabet} \left[\vphantom{\sum_{a \in \alphabet}} - \sum_{a \in \alphabet} \tv[j]_b \tm[j]_{a, b} \log\left(\frac{\tm[j]_{a, b}}{E_{a, b}}\right) + O\left(\frac{\log\norm{\level{n+j+1}}}{\norm{\level{n+j+1}}}\right)\right] \\
        = & \frac{\log{\norm{\pblock{n_i}(\tv[0])}}}{\norm{\lattice{n}[m]}} - \sum_{j=0}^{m-n-1} \frac{\norm{\level{n+j+1}}}{\norm{\lattice{n}[m]}} \DKL(\tm^{(j)}||E)^T \tv[j] + O\left(\frac{\log \norm{\lattice{n}[m]}}{\norm{\lattice{n}[m]}}\right),
    \end{align*}
    where the last equality follows from the concavity of $x \mapsto -x \log x$.
\end{proof}
\noindent As a corollary of the proposition above, we transform \eqref{eq:original_optimization} into the following.
\begin{equation} \label{eq:explicit_optimization}
    \begin{cases}
        \text{maximize} & \frac{\log \norm{\pblock{n}[n](\tv[0])}}{\norm{\lattice{n}[n+k]}} - \sum_{j=0}^{k-1} \frac{\norm{\level{n+j+1}}}{\norm{\lattice{n}[n+k]}} \DKL(\tm^{(j)}||E)^T \tv^{(j)} \\
        \text{subject to} & (\tv, \tm) \in \proddom*{n}[n+k]
    \end{cases}
    \tag{Problem 3}
\end{equation}

The trickiest part of \eqref{eq:explicit_optimization} lies in the first term involving $\norm{\pblock{n}[n](\tv[0])}$, for which, to the authors' knowledge, no good estimate is available. Nevertheless, our main interest lies in the case $k \to \infty$, where the term converges uniformly (with respect to $\tv$) to zero and hence could be safely dropped. Thus, we shall continue our discussion by rephrasing the problem, by writing $\pv=\tv*$ and $\pm=\tm*$, as
\begin{equation} \label{eq:reversed_optimization}
    \begin{cases}
        \text{maximize} & - \sum_{j=0}^{k-1} \frac{d-1}{d^{j+1}-d^{j-k}} \DKL(\pm^{(j)}||E)^T \pv^{(j+1)} \\[.7em]
        \text{subject to} & (\pv, \pm) \in \cev{\proddom*{n}[n+k]}:=\{(\tv*,\tm*): (\tv,\tm) \in \proddom*{n}[n+k]\}
    \end{cases}
    \tag{Problem 4}
\end{equation}
We remark that the bottom-up convention of \eqref{eq:reversed_optimization} turns out to be more convenient for our later discussion.

In what follows, we will further show that, as $n\to\infty$, the maximum of \eqref{eq:reversed_optimization} converges to that with the feasible domain replaced by
\begin{equation*}
    \proddom{k}=\{(\pv,\pm) \in \Gamma_{\alphabet}^{k+1} \times \Upsilon_{\alphabet}^{k}: \pm[j]_{a,b} = 0 \text{ if } E_{a,b}=0, \pv[j] = \pm[j] \pv[j+1], 0 \le j < k\},
\end{equation*}
which is a set containing $\cev{\proddom*{n}[n+k]}$ for all $n$. It is noteworthy that $\proddom{k}$ is a compact set on which the objective function is continuous, guaranteeing a maximizer for the problem defined on $\cev{\proddom*{n}[n+k]}$. Given these, our aforementioned aim could be reached if we were able to show that $\cev{\proddom*{k}[k+n]}$, as $n \to \infty$, is asymptotically dense in $\proddom{k}$. Unfortunately, it is rarely the case, since for $(\tv,\tm) \in \proddom*{k}[k+n]$, $\tv[i]_b = 0$ forces $\tm[i]_{a,b} = \frac{E_{a,b}}{\sum_{c \in \alphabet}E_{c,b}}$, whereas no such constraint is seen in $\proddom{k}$. However, it is obvious that the maximum is the same with or without the constraint, and thus we may further impose the following restrictions. 
\begin{definition}
    Let $E$ be the restriction matrix. A pair $(\tv,\tm) \in \Gamma_{\alphabet} \times \Upsilon_{\alphabet}$ is called \emph{typical} if the following hold.
    \begin{itemize}
        \item $\tm_{a,b} = 0$ if $E_{a,b} = 0$.
        \item $\tm_{a,b} = E_{a,b} \cdot (\sum_{a \in \alphabet} E_{c, b})^{-1}$ if $\tv_{b} = 0$.
    \end{itemize}
\end{definition}
\noindent Furthermore, for any fixed $\pm \in \Upsilon_{\alphabet}^{k+1}$, the objective function is just an affine function of $\pv[0] \in \Gamma_{\alphabet}$. Therefore, we can assume such maximizer, denoted by $(\pv,\pm) \in \proddom{k}$, has its last vector $\pv[k]$ taking the form of $\pv[k]=\stdvec{a}$ ($a \in \alphabet$) and lies in
\begin{multline*}
    \proddom{k}'=\{(\pv,\pm) \in (\Gamma_{\alphabet}^k \times \{\stdvec{a}\}_{a \in \alphabet}) \times \Upsilon_{\alphabet}^{k}: \pv[j] = \pm[j] \pv[j+1], \\
    (\pm[j], \pv[j+1]) \text{ is typical}, 0 \le j < k\}.
\end{multline*}
We then show in the following lemma that the set $\cev{\proddom*{n}[n+k]}$ is asymptotically dense in $\proddom{k}'$ as $n$ tends to infinity.
\begin{lemma} \label{lem:dense_limiting_domain}
    Let $(\tv',\tm) \in \dvset{n} \times \Upsilon_{\alphabet}$ be typical. Then, there exists $((\tv',\tm' \tv'), \tm') \in \dvset{n}[n+1]$ such that
    \[
    | \tm_{a,b} - \tm_{a,b}'| \le \frac{1}{\norm{\level{n+1}} \tv_{b}'} \quad \text{for all } a, b \in \alphabet.
    \]
    Moreover, $\tm'$ can be chosen so as to satisfy that ${\tm_{a,b}}'=0$ if $\tm_{a,b}=0$.
\end{lemma}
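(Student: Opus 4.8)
The goal is to approximate a typical pair $(\tv', \tm)$, where $\tv'$ is a genuine distribution vector on level $n$, by a pair $((\tv', \tm'\tv'), \tm')$ that is actually realized by a block on $\lattice{n}[n+1]$, with the transition matrix $\tm'$ close to $\tm$ in the stated entrywise sense and respecting the zero pattern of $\tm$. The natural approach is purely combinatorial: for each symbol $b \in \alphabet$ with $\tv'_b > 0$, we know there are exactly $N_b := \norm{\level{n}} \tv'_b$ vertices $g$ on level $n$ carrying symbol $b$. Since the $d$-tree is isotropic, each such $g$ has exactly $d$ children, so collectively the type-$b$ vertices have $d N_b$ children on level $n+1$. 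We must decide how to color these $d N_b$ children so that the resulting empirical column $(\tm'_{a,b})_{a \in \alphabet}$ is within $\frac{1}{\norm{\level{n+1}} \tv'_b} = \frac{1}{d N_b}$ of the target column $(\tm_{a,b})_{a \in \alphabet}$, entrywise.

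\textbf{Key steps.} First I would reduce to a single column: fix $b$ with $\tv'_b > 0$. The target column $(\tm_{a,b})_a$ is a probability vector on $\{a : E_{a,b} > 0\}$ (by typicality, $\tm_{a,b} = 0$ whenever $E_{a,b} = 0$, and since $E$ satisfies \eqref{eq:assumption} the support is nonempty). I want integers $m_{a,b} \ge 0$ with $\sum_a m_{a,b} = d N_b$, $m_{a,b} = 0$ if $E_{a,b} = 0$, and $\norm{m_{a,b}/(d N_b) - \tm_{a,b}} \le 1/(d N_b)$, i.e. $\norm{m_{a,b} - d N_b\, \tm_{a,b}} \le 1$. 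This is the standard rounding fact: given reals $x_a \ge 0$ summing to an integer $S$, one can choose integers $m_a$ with $\sum_a m_a = S$ and $\norm{m_a - x_a} < 1$ — take $m_a = \lfloor x_a \rfloor$ for all but a deficit of $S - \sum_a \lfloor x_a \rfloor$ indices (which is an integer in $[0, \norm{\alphabet})$), and bump those up by one. Applying this with $x_a = d N_b\, \tm_{a,b}$ and $S = d N_b$ gives the $m_{a,b}$; and $x_a = 0$ forces $m_{a,b} = 0$, so the zero pattern is automatically inherited from $\tm$. Second, I would assemble the block: on level $n$, realize the distribution $\tv'$ by whatever block witnesses $\tv' \in \dvset{n}$; then for each $b$ distribute the $d N_b$ children-slots of the type-$b$ vertices among symbols $a$ according to the multiplicities $m_{a,b}$ — any assignment works since the children are interchangeable leaves at this stage. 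Each edge so created has $E_{a,b} > 0$, so the block extends to an element of $\aprod[d][E]$ (using \eqref{eq:assumption} to continue downward indefinitely if needed, exactly as in the earlier lemma). By construction its level-$(n+1)$ distribution is $\tm' \tv'$ where $\tm'_{a,b} := m_{a,b}/(d N_b)$ for $\tv'_b > 0$, and for $\tv'_b = 0$ we are free to set $\tm'_{a,b} := E_{a,b}(\sum_c E_{c,b})^{-1}$ (there is no such vertex, so this choice is vacuous for realizability but keeps $(\tm', \tv')$-type bookkeeping consistent); this also makes $\tm' \in \Upsilon_{\alphabet}$ and preserves $\tm'_{a,b} = 0 \Leftrightarrow$ relevant zero, in particular $\tm'_{a,b} = 0$ whenever $\tm_{a,b} = 0$. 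Third, I would record the bound: $\norm{\tm'_{a,b} - \tm_{a,b}} = \norm{m_{a,b} - d N_b \tm_{a,b}}/(d N_b) \le 1/(d N_b) = 1/(\norm{\level{n+1}} \tv'_b)$ for $\tv'_b > 0$, which is the claim. The columns with $\tv'_b = 0$ are not constrained by the inequality (the right side is $+\infty$), so nothing further is needed there.

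\textbf{Main obstacle.} There is no deep obstacle — the statement is essentially a quantitative rounding lemma dressed in tree-shift notation. The only points requiring care are bookkeeping ones: making sure the integer rounding is compatible with the constraint $\sum_a m_{a,b} = d N_b$ exactly (so that $\tm'$ is genuinely stochastic, not merely approximately), and making sure the zero-pattern constraint $m_{a,b} = 0$ if $E_{a,b} = 0$ survives the rounding — both are handled by the choice $x_a = d N_b \tm_{a,b}$, since $E_{a,b} = 0 \Rightarrow \tm_{a,b} = 0 \Rightarrow x_a = 0 \Rightarrow m_{a,b} = 0$. A secondary point is that one must invoke assumption \eqref{eq:assumption} to guarantee that the constructed level-$(n+1)$ coloring genuinely extends to a point of $\aprod[d][E]$, just as in the preceding lemma; this is routine. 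Thus the proof is short, and I would present it as: (i) the single-column rounding fact, (ii) its assembly into a block, (iii) the resulting estimate.
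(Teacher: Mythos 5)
Your proposal is correct and follows essentially the same route as the paper: the paper characterizes the realizable pairs $((\tv',\tm'\tv'),\tm')$ as exactly those where each column with $\tv'_b>0$ has entries that are integer multiples of $1/(\norm{\level{n+1}}\tv'_b)$ and the columns with $\tv'_b=0$ are the forced default columns, and then asserts the rounding; you simply make that rounding (floor plus bumping entries with positive fractional part, which also preserves the zero pattern) and the downward extension via assumption \eqref{eq:assumption} explicit.
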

\begin{proof}
    This is a consequence of the fact that $((\tv',\tm' \tv'), \tm') \in \dvset{n}[n+1]$ if and only if $\tm' \in \Upsilon_{\alphabet}$ and that the following hold:
    \begin{itemize}
        \item $\tm_{a,b}'=\frac{c_{a,b}}{\norm{\level{n+1}} \tv_{b}'}$ for some integer $c_{a,b}$ if $\tv_{b}' \ne 0$;
        \item $\tm_{a,b}'=E_{a,b} \cdot (\sum_{a \in \alphabet} E_{c, b})^{-1}$ if $\tv_b=0$.
    \end{itemize}
    It is not hard to see that these criteria, together with the additional properties stated in the lemma, can always be satisfied simultaneously by some $\tm' \in \Upsilon_{\alphabet}$.
\end{proof}
\begin{proposition} \label{prop:dense_pattern}
    $\lim_{n \to \infty} \sup_{(\pv,\pm) \in \proddom{k}'} \mathtt{d}_{v,V}(\cev{\proddom*{n}[n+k]}, (\pv,\pm)) = 0$.
\end{proposition}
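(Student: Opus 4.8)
The plan is to build, for each $(\pv,\pm) \in \proddom{k}'$, a nearby compatible pair in $\cev{\proddom*{n}[n+k]}$ by working from the bottom of the tree upward, using Lemma~\ref{lem:dense_limiting_domain} to approximate one transition matrix at a time. Writing $(\tv,\tm) = (\pv*,\pm*)$, so that $\tv[n] = \pv[k] = \stdvec{a}$ lies automatically in $\dvset{n}$ (it is realized by any block with a single symbol $a$ at level $n$, which exists by assumption \eqref{eq:assumption} on $E$ and typicality), we construct the approximant inductively over the levels $n, n+1, \dots, n+k$. Suppose we have produced a typical pair with distribution vector $\tv'[n+j] \in \dvset{n+j}$ close to $\tv[n+j]$. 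Applying Lemma~\ref{lem:dense_limiting_domain} to $(\tv'[n+j], \tm[n+j])$ yields a stochastic matrix $\tm'[n+j]$ with $\tm'[n+j]_{a,b} = 0$ whenever $\tm[n+j]_{a,b} = 0$, with $(\tv'[n+j], \tm'[n+j]\tv'[n+j]) \in \dvset{n+j}[n+j+1]$, and with the entrywise bound $|\tm[n+j]_{a,b} - \tm'[n+j]_{a,b}| \le (\norm{\level{n+j+1}} \tv'[n+j]_b)^{-1}$ at those coordinates $b$ for which $\tv'[n+j]_b \ne 0$. We then set $\tv'[n+j+1] = \tm'[n+j] \tv'[n+j]$ and continue.

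The key estimates are two. First, the transition matrices converge in $\mathtt{d}_V$: at coordinates $b$ with $\tv'[n+j]_b \ne 0$ the bound from the lemma gives $\mathtt{d}_v((\tm[n+j]_{\cdot,b}), (\tm'[n+j]_{\cdot,b})) \le \tfrac{\norm{\alphabet}}{2} (\norm{\level{n+j+1}} \tv'[n+j]_b)^{-1}$, and at coordinates $b$ with $\tv'[n+j]_b = 0$ typicality of both $(\tm[n+j], \tv[n+j])$ and $(\tm'[n+j], \tv'[n+j])$ forces the two columns to agree exactly (both equal $E_{\cdot,b}/\sum_c E_{c,b}$), so those columns contribute nothing. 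The only danger is a column $b$ where $\tv'[n+j]_b$ is small but nonzero; however, $\tv'[n+j]_b$ is a multiple of $\norm{\level{n+j}}^{-1}$, so once it is nonzero it is at least $\norm{\level{n+j}}^{-1}$, whence $(\norm{\level{n+j+1}} \tv'[n+j]_b)^{-1} \le \norm{\level{n+j}} / \norm{\level{n+j+1}} = 1/d$ — not small. To fix this we need that such tiny-but-nonzero columns are handled more carefully, or that $\tv'[n+j]_b$ is actually bounded below away from $0$ once $\tv[n+j]_b \ne 0$; the latter follows by induction from the second estimate below, which controls $\mathtt{d}_v(\tv[n+j], \tv'[n+j])$, together with the fact that for a column with $\tv'[n+j]_b \ne 0$ and $\tv[n+j]_b$ bounded below, the error is $O(1/\norm{\level{n+j+1}})$. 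Second, the distribution vectors converge: using Proposition~\ref{prop:triangle_inequality},
\begin{align*}
    \mathtt{d}_v(\tv[n+j+1], \tv'[n+j+1])
    &= \mathtt{d}_v(\tm[n+j]\tv[n+j], \tm'[n+j]\tv'[n+j]) \\
    &\le \mathtt{d}_v(\tm[n+j]\tv[n+j], \tm[n+j]\tv'[n+j]) + \mathtt{d}_v(\tm[n+j]\tv'[n+j], \tm'[n+j]\tv'[n+j]) \\
    &\le \mathtt{d}_v(\tv[n+j], \tv'[n+j]) + \mathtt{d}_V(\tm[n+j], \tm'[n+j]),
\end{align*}
so the accumulated distribution error after $k$ steps is at most the sum of the $k$ transition errors, each of which is $O(1/\norm{\level{n+1}})$ (uniformly, once the lower-bound issue is resolved), hence $O(k/\norm{\level{n+1}}) \to 0$ as $n \to \infty$ with $k$ fixed. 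Taking the max metric over the $k$ coordinates, $\mathtt{d}_{v,V}$ between $(\pv,\pm)$ and its constructed approximant $(\tv'*, \tm'*) \in \cev{\proddom*{n}[n+k]}$ is $O(k/\norm{\level{n+1}})$, and taking the supremum over $(\pv,\pm) \in \proddom{k}'$ and then $n \to \infty$ gives the claim.

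The main obstacle is precisely the small-but-nonzero-column phenomenon flagged above: the per-step bound $(\norm{\level{n+j+1}}\tv'_b)^{-1}$ from Lemma~\ref{lem:dense_limiting_domain} degrades to $O(1/d)$ rather than $o(1)$ when $\tv'_b$ is only $\Theta(\norm{\level{n+j}}^{-1})$. Resolving it requires a simultaneous induction showing that $\tv'[n+j]_b$ stays within $O(k/\norm{\level{n+1}})$ of $\tv[n+j]_b$, so that for those $b$ with $\tv[n+j]_b > 0$ — which by compactness of $\proddom{k}'$ and the structure of the recursion $\tv[j] = \tm[j]\tv[j+1]$ can be taken uniformly bounded below on the relevant index set, or else handled by a limiting argument restricting to the support — the quantity $\tv'[n+j]_b$ is also bounded below, making the error genuinely $O(1/\norm{\level{n+1}})$; and for those $b$ with $\tv[n+j]_b = 0$, one argues that $\tv'[n+j]_b$ can be kept equal to $0$ as well (choosing the approximating distribution vector to have the same support), so typicality kicks in and the column error vanishes. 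Threading this support-preservation through the inductive construction, while keeping every approximant typical and entrywise-zero-compatible with $E$, is the technical heart of the argument; everything else is the telescoping estimate above.
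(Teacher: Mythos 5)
Your construction is the same as the paper's: start from ${\tv[0]}'=\tv[0]=\stdvec{a}\in\dvset{n}$, apply Lemma~\ref{lem:dense_limiting_domain} level by level to produce ${\tm[j]}'$ and ${\tv[j+1]}'={\tm[j]}'{\tv[j]}'$, telescope the errors via Proposition~\ref{prop:triangle_inequality}, and let typicality force exact agreement of the columns sitting over zero entries of the distribution vector. You also correctly isolate the delicate point, namely that the per-step bound $(\norm{\level{n+j+1}}{\tv[j]}'_b)^{-1}$ is useless when ${\tv[j]}'_b$ is only of order $\norm{\level{n+j}}^{-1}$. The gap is in how you propose to resolve it: there is no uniform positive lower bound on the nonzero entries of elements of $\proddom{k}'$ --- compactness gives no such bound, since for instance $\pm[k-1]_{b,a}$, and hence $\pv[k-1]_b$, can be an arbitrarily small positive number --- so the step ``uniformly bounded below on the relevant index set by compactness'' fails. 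The paper's fix is per target: fix $(\tv,\tm)=(\pv*,\pm*)$, let $\zeta$ be the minimum of its finitely many positive entries, and take $n$ so large that $k\norm{\alphabet}\delta/\zeta<\min\{1/2,\epsilon\}$ with $\delta=\max_{i,a:\,\tv[i]_a>0}(\norm{\level{n+i+1}}\tv[i]_a)^{-1}$. A single induction then delivers simultaneously the estimates $\mathtt{d}_v({\tv[i]}',\tv[i])\le i\norm{\alphabet}\delta$, $\mathtt{d}_V({\tm[i]}',\tm[i])\le\norm{\alphabet}\delta$ and the \emph{two-way} zero-pattern preservation: Lemma~\ref{lem:dense_limiting_domain} only guarantees ${\tm[i]}'_{a,b}=0$ when $\tm[i]_{a,b}=0$, and the converse (positive entries stay positive, because ${\tm[i]}'_{a,b}\ge\tm[i]_{a,b}-\norm{\alphabet}\delta\ge\zeta-\norm{\alphabet}\delta>0$) is exactly what keeps the support of ${\tv[i+1]}'$ equal to that of $\tv[i+1]$, so that typicality can be invoked at the next level. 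Your sketch leaves this direction implicit, and it is precisely where the quantitative choice of $n$ relative to $\zeta$ is used.

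Consequently the argument, once completed along these lines, gives convergence for each fixed $(\pv,\pm)$ at a rate depending on $\zeta$; it does not give the uniform $O(k/\norm{\level{n+1}})$ bound you claim over all of $\proddom{k}'$. The supremum in the statement can still be recovered, e.g.\ because $(\pv,\pm)\mapsto\mathtt{d}_{v,V}(\cev{\proddom*{n}[n+k]},(\pv,\pm))$ is $1$-Lipschitz, so pointwise convergence to $0$ on the compact set $\proddom{k}'$ upgrades to uniform convergence by a finite covering argument (or one notes that density at a maximizer is all that Step 2 actually needs); the paper's own write-up is likewise per target and relies on such an upgrade without spelling it out.
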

\begin{proof}
    Let $\epsilon > 0$ and $(\tv,\tm)=(\pv*,\pm*) \in \proddom{k}'$ be fixed, and denote 
    \[
    \zeta=\min\left\{\min_{i,a: \tv[i]_a > 0} \tv[i]_a, \min_{i,a,b: \tm[i]_{a,b} > 0} \tm[i]_{a,b}\right\}. 
    \]
    We take $n$ sufficiently large such that 
    \[
    \delta:=\max_{i,a: \tv[i]_a > 0} (\norm{\level{n+i+1}} \tv[i]_a)^{-1} \quad \text{ satisfying } \quad \frac{k \norm{\alphabet} \delta}{\zeta} < \min\left\{\frac{1}{2}, \epsilon\right\}.
    \]
    We then construct a pair $(\tv',\tm') \in \proddom*{n}[n+k]$ such that $\mathtt{d}_{v,V}((\cev{\tv'},\cev{\tm}'),(\tv,\tm)) < \epsilon$ through the following process. To begin with, take ${\tv[0]}' = \tv[0] \in \dvset{n}$ and choose ${\tm[0]}'$ as in Lemma \ref{lem:dense_limiting_domain} to define ${\tv[1]}' = {\tm[0]}' {\tv[0]}' \in \dvset{n+1}$. Now suppose $(({\tv[i]}')_{0 \le i \le j}, ({\tm[i]}')_{0 \le i \le j-1})$ is found and ${\tv[j]}' = 0$ if and only if $\tv[j] = 0$, then $({\tv[j]}',\tm[j]) \in \dvset{n+j} \times \Upsilon_{\alphabet}$ is typical, so we again choose ${\tm[j]}'$ as in Lemma \ref{lem:dense_limiting_domain} and set ${\tv[j+1]}' = {\tm[j]}' {\tv[j]}' \in \dvset{n+j+1}$. Otherwise, the process terminates. We claim that the process terminates only after step $k$. To see this, we make use of the following properties, which are proved in the next paragraph, for the well-defined $(({\tv[i]}')_{0 \le i \le j}, ({\tm[i]}')_{0 \le i \le j-1})$: for all $0 \le i \le j - 1$:
    \begin{enumerate}[(a)]
        \item ${\tv[i]_a}' = 0$ if and only if $\tv[i]_a = 0$, and $\mathtt{d}_v({\tv[i]}', \tv[i]) \le i \norm{\alphabet} \delta$.
        \item ${\tm[i]_{a,b}}' = 0$ if and only if $\tm[i]_{a,b} = 0$, and $\mathtt{d}_V({\tm[i]}',\tm[i]) \le \norm{\alphabet} \delta$.
    \end{enumerate}
    If the listed properties hold and if we suppose toward a contradiction that $j < k$, then we see that ${\tv[j]}'_a = ({\tm[j-1]}' {\tv[j-1]}')_a = 0$ if and only if $\tv[j]_a = (\tm[j-1] \tv[j-1])_a = 0$, and thus the process should proceed, which is a contradiction. Now that $(({\tv[j]}',{\tm[j]}'{\tv[j]}'),{\tm[j]}') \in \dvset{n+j}[n+j+1]$ for all $0 \le j < k$, which implies $(\tv,\tm) \in \proddom*{n}[n+k]$ and $\mathtt{d}_{v,V}(\cev{\proddom*{n}[n+k]}, (\pv,\pm)) < k \norm{\alphabet} \delta < \epsilon$ as desired.
    
    We prove the aforementioned claim by induction on $i$. When $i=0$, since $\tv[0]={\tv[0]}'$, property (a) is automatic. For (b), if $\tv[0]_{b} = 0$, then 
    \[
    {\tm[0]_{a,b}}' = \tm[0]_{a,b} = E_{a,b} \cdot (\sum_{a \in \alphabet} E_{c, b})^{-1}.
    \]
    If $\tv[0]_{b} > 0$, then by Lemma \ref{lem:dense_limiting_domain},
    \begin{align*}
        {\tm[0]_{a,b}}' &\ge {\tm[0]_{a,b}} - \frac{1}{\norm{\level{n+1}} \tv_{b}'} \ge \zeta - \delta > 0.
    \end{align*}
    In addition, we may apply Lemma \ref{lem:dense_limiting_domain} again to deduce $\mathtt{d}_V({\tm[0]}',\tm[0]) \le \norm{\alphabet} \delta$.

    For the induction step, we assume the hypotheses hold for $i - 1 < j - 1$. To prove (a) for the index $i$, note that ${\tv[i]}' = {\tm[i-1]}'{\tv[i-1]}' = 0$ if and only if $\tv[i-1] = \tm[i-1] \tv[i-1] = 0$ is straightforward from the induction hypotheses, and a simple calculation shows that
    \begin{multline*}
        \mathtt{d}_v({\tv[i]}', \tv[i])=\mathtt{d}_v({\tm[i-1]}'{\tv[i-1]}', {\tm[i-1]}{\tv[i-1]}) \\
        \le \mathtt{d}_V({\tm[i-1]}', {\tm[i-1]}) + \mathtt{d}_v({\tv[i-1]}', {\tv[i-1]})  \le i \norm{\alphabet} \delta.
    \end{multline*}
    For (b), we note that if ${\tv[i]_{a}}' > 0$,
    \begin{align*}
        (\norm{\level{i+1}} {\tv[i]_{a}}')^{-1} &=  \left(\norm{\level{i+1}} {\tv[i]_{a}}\left(1-\frac{{\tv[i]_a}'-\tv[i]_a)}{\tv[i]_{a}}\right)\right)^{-1} \\
        &\le \left(\norm{\level{i+1}} {\tv[i]_{a}}\left(1-\frac{\mathtt{d}_v({\tv[i]}', \tv[i])}{\tv[i]_{a}}\right)\right)^{-1} \\
        &\le \left(\norm{\level{i+1}} {\tv[i]_{a}}\left(1-\frac{i \norm{\alphabet} \delta}{\zeta})\right)\right)^{-1} \le 2 \delta.
    \end{align*}
    We apply Lemma \ref{lem:dense_limiting_domain} for the last time to obtain $\mathtt{d}_V({\tm[i]}',\tm[i]) \le \norm{\alphabet} \delta$. Furthermore, ${\tm[i]_{a,b}}' = \tm[i]_{a,b}$ if $\tv[i]_{a,b} = 0$, and if $\tv[i]_{a,b} \ne 0$, then
    \begin{align*}
        {\tm[i]_{a,b}}' &\ge {\tm[i]_{a,b}} - \mathtt{d}_V({\tm[i]}',\tm[i]) \ge \zeta - \norm{\alphabet} \delta > 0.
    \end{align*}
    Our proof by induction is therefore completed.
\end{proof}
As the last step of this subsection, we would like to once again reformulate the problem \eqref{eq:reversed_optimization} to eliminate its dependence on $k$ in the objective function. We note that the function $\DKL(\pm^{(j)}||E)^T \pv^{(j+1)}$ in \eqref{eq:reversed_optimization} is uniformly bounded for all $j$, $k$ and $\pm[j]$, $\pv[j+1]$. Furthermore, its associated coefficient $(d-1)/(d^{j+1}-d^{j-k})$ admits a limit $(d-1)/d^{j+1}$ and the ratio between the two coefficients is uniform for all $j$:
\[
\left.\frac{d-1}{d^{j+1}-d^{j-k}}\middle/\frac{d-1}{d^{j+1}}\right. = \frac{1}{1-d^{-k-1}} \to 1.
\]
As a consequence, we arrive at the following optimization problem
\begin{equation} \label{eq:limiting_optimization}
    \begin{cases}
        \text{maximize} & -\sum_{j=0}^{k-1} \frac{d-1}{d^{j+1}} \DKL(\pm[j]||E)^T \pv^{(j+1)} \\
        \text{subject to} & (\pv,\pm) \in \proddom{k}
    \end{cases}
    \tag{Problem 5}
\end{equation}
whose maximum, denoted $P^{(k)}(d,E)$, converges, due to Proposition \ref{prop:dense_pattern} and the arguments above, to the topological pressure:
\[
\mathbf{P}(\aprod[d][E],E) = \lim_{k \to \infty} P^{(k)}(d,E).
\]
It is noteworthy that the optimization problem above is closely related to \eqref{eq:infinite_optimization},
whose the maximum $P^{(\infty)}(d,E)$ is well-defined since its feasible domain is compact with respect to the metric $\mathtt{d}^{\infty}_{v,V}$ defined as
\begin{equation*}
    \mathtt{d}^{\infty}_{v,V}((\pv,\pm),(\pv',\pm'))=\sum_{j=0}^{\infty} \frac{d-1}{d^{j+1}} \mathtt{d}_{v,V}((\pv[j], \pm[j]), ({\pv[j]}', {\pm[j]}')),
\end{equation*}
and the objective function is continuous on the feasible domain. Our goal is thus to demonstrate that $P^{(k)}(d,E) \to P^{(\infty)}(d,E)$, which, as a function of $d \in (1,\infty)$, can be shown to be continuous and increasing using uniform convergence of $P^{(k)}(d,E)$ on compact subintervals. However, the proof for this requires knowledge about the solutions of \eqref{eq:limiting_optimization} and is on hold for the time being. 

\subsection{Step 3} \label{sec:step_3}
As pointed out in the last subsection, the objective function of \eqref{eq:limiting_optimization} is affine in $\pv[k]$, and thus $\pv = \stdvec{a}$ for some $a \in \alphabet$ can be assumed since $\Gamma_{\alphabet}$ is the convex hull of $\{\stdvec{a}\}_{a \in \alphabet}$. The problem simplifies to the following:
\begin{equation} \label{eq:unconstrained_optimization}
    \begin{cases}
        \text{maximize} & -\sum_{j=0}^{k-1} \frac{d-1}{d^{j+1}} \DKL(\pm[j]||E)^T \prod_{\ell=j+1}^{k-1} \pm[\ell] \pv \\
        \text{subject to} & \pv=\stdvec{a}, \pm \in \Upsilon_{\alphabet}^{k} \\
        & \pm[j]_{a,b} = 0 \text{ if } E_{a,b}=0, 0 \le j < k
    \end{cases}.
\end{equation}
We denote by $F_k(\pv,\pm;1/d,E)$ the objective function of \eqref{eq:unconstrained_optimization} and by $F_{\infty}(\pv,\pm;1/d,E)$ the objective function of \eqref{eq:infinite_optimization}, and we find a maximizer as follows. For conciseness, we suppress $E$ and $1/d$ if doing so should not occasion any confusion.
\begin{proposition} \label{prop:maximizer}
    The optimization problem \eqref{eq:unconstrained_optimization} admits an optimal transition $\pm[i]$ ($i=0,\cdots,k-1$) independent of $\pv$ that is defined as
    \[
        \pm[i]_{a,b} = \begin{cases}
            \frac{e^{\frac{d^{i+1}}{d-1}\lambda^{(i)}_a} E_{a,b}}{\sum_{c: E_{c,b}>0} e^{\frac{d^{i+1}}{d-1} \lambda^{(i)}_c} E_{c,b}} & \text{if } E_{a,b}>0, \\
            0 & \text{otherwise},
        \end{cases}
    \]
    where $\lambda^{(0)}=0$ and
    \begin{align*}
        \lambda^{(i)}_a= & \frac{d-1}{d^{i}} \log\sum_{b:E_{b,a}>0} e^{\frac{d^{i}}{d-1} \lambda^{(i-1)}_b} E_{b,a} \quad \text{ for } i=1, \cdots,k.
    \end{align*}
    Moreover, the maximum of $F_k(\pv,\pm)$ is ${\lambda^{(k)}}^T \pv$.
\end{proposition}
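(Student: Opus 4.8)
The plan is to solve \eqref{eq:unconstrained_optimization} by a backward-induction (dynamic-programming) argument, eliminating the matrices $\pm[0], \pm[1], \ldots, \pm[k-1]$ one at a time in order of \emph{increasing} index. The only analytic input is the elementary Gibbs variational identity: for $v \in \mathbb{R}^{\alphabet}$ and $q \in \mathbb{R}_{\ge 0}^{\alphabet}$ with $q \ne 0$,
\[
\max\Big\{\textstyle\sum_{a \in \alphabet} p_a v_a - \sum_{a \in \alphabet} p_a \log\tfrac{p_a}{q_a} \ :\ p \in \Gamma_{\alphabet},\ p_a = 0 \text{ whenever } q_a = 0\Big\} = \log \sum_{a :\, q_a > 0} e^{v_a} q_a,
\]
the maximum being attained uniquely when $p_a$ is proportional to $e^{v_a} q_a$ (normalised over $\{a \in \alphabet : q_a > 0\}$); this follows by factoring $e^{v_a} q_a = \big(\sum_{c} e^{v_c} q_c\big)\bar q_a$ with $\bar q$ a probability vector and noting that the bracketed quantity equals $\log\big(\sum_{c} e^{v_c} q_c\big) - \DKL(p\,||\,\bar q)$.

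Abbreviate $w_j = (d-1)/d^{j+1}$ and, for a tuple $\pm$, set $\pv[j] = \pm[j] \pm[j+1] \cdots \pm[k-1] \pv$, so that $\pv[k] = \pv$, $\pv[j] = \pm[j]\pv[j+1]$, and $F_k(\pv,\pm) = \sum_{j=0}^{k-1} w_j\big(-\DKL(\pm[j]\,||\,E)^T \pv[j+1]\big)$. I would prove by induction on $i \in \{0,1,\ldots,k\}$ that
\[
\max_{\pm[0],\ldots,\pm[i-1]} \ \sum_{j=0}^{i-1} w_j\big(-\DKL(\pm[j]\,||\,E)^T \pv[j+1]\big) = (\lambda^{(i)})^T \pv[i] \qquad \text{for every } \pv[i] \in \Gamma_{\alphabet},
\]
where the maximum ranges over left stochastic matrices respecting the zero pattern of $E$ and $\pv[i-1], \ldots, \pv[1]$ are generated downward from $\pv[i]$; the base case $i = 0$ is the empty sum with $\lambda^{(0)} = 0$. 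For the inductive step one peels off the $j = i$ summand and, with $\pm[i]$ (hence $\pv[i] = \pm[i]\pv[i+1]$) held fixed, applies the induction hypothesis to the remaining $j \le i-1$ terms; this collapses the problem to
\[
\max_{\pm[i]}\ \sum_{b \in \alphabet} \pv[i+1]_b \Big[\textstyle \sum_{a \in \alphabet} \pm[i]_{a,b}\, \lambda^{(i)}_a \ -\ w_i \sum_{a \in \alphabet} \pm[i]_{a,b} \log\tfrac{\pm[i]_{a,b}}{E_{a,b}}\Big].
\]
Treating each column $b$ separately (the columns with $\pv[i+1]_b = 0$ being irrelevant to the value) this is $w_i$ times a Gibbs maximization with $v = \lambda^{(i)}/w_i$ and $q = E_{\cdot,b}$; since $1/w_i = d^{i+1}/(d-1)$, the identity above yields exactly the stated formula for $\pm[i]_{a,b}$, and substituting the optimal column values back into $\sum_{b} \pv[i+1]_b(\cdots)$ produces $(\lambda^{(i+1)})^T\pv[i+1]$ through the defining recursion for $\lambda^{(i+1)}$. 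Assumption \eqref{eq:assumption} guarantees that every column of $E$ has a positive entry, so each $\pm[i]$ is a genuine left stochastic matrix with the prescribed support; and since the $\lambda^{(i)}$ depend only on $E$ and $d$, so does the optimal $\pm[i]$ exhibited above. Taking $i = k$ gives $\max_{\pm} F_k(\pv, \pm) = (\lambda^{(k)})^T\pv$.

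I expect the only real difficulty to be bookkeeping rather than mathematics: one must verify that the recursively optimized ``value'' depends on $\pv[i]$ alone and is \emph{linear} in it, so that the induction hypothesis can legitimately be applied underneath the optimization over $\pm[i]$ — it is precisely to arrange this that the matrices are eliminated starting from the innermost index $\pm[0]$. The remaining work, namely the column-wise Gibbs computation and the bookkeeping needed to match the constants $w_i$ with the exponents $d^{i+1}/(d-1)$ and with the recursion $\lambda^{(i)}_a = \frac{d-1}{d^{i}}\log\sum_{b :\, E_{b,a}>0} e^{\frac{d^{i}}{d-1}\lambda^{(i-1)}_b} E_{b,a}$, is routine.
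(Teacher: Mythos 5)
Your argument is correct and is essentially the paper's own proof in a slightly different packaging: the paper fixes the already-constructed optimal matrices in positions $0,\dots,i-1$, observes that the objective then takes the form $\bigl({\lambda^{(i)}}^T \mathsf{Z} - \tfrac{d-1}{d^{i+1}}\DKL(\mathsf{Z}||E)^T\bigr)$ applied to a non-negative weight vector, and solves the same column-wise Gibbs (entropy) maximization with the same recursion for $\lambda^{(i)}$, whereas you organize the identical computation as a value-function induction showing the optimized partial sum equals ${\lambda^{(i)}}^T$ applied to the current distribution vector. Both routes yield the same optimizer, the same linearity-in-the-state observation, and the same maximum ${\lambda^{(k)}}^T\pv$, so no gap.
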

\begin{proof}
    The idea of proving the optimality is as follows. We construct a sequence $\pm$, independent of $\pv$ and $k$, such that given any $i=0, \cdots, k-1$ and $\tm[0],\cdots,\tm[i-1] \in \Upsilon_{\alphabet}$, $\mathsf{Z}=\pm[i]$ maximizes the function
    \[
    \mathsf{Z} \xmapsto{F_{k;\tm[i+1],\cdots,\tm[k-1]}} F_k(\pv,\pm[0],\cdots,\pm[i-1],\mathsf{Z},\tm[i+1],\cdots,\tm[k-1]).
    \]
    To find $\pm$, we should also introduce an auxiliary sequence $\lambda^{(i)}$ in our construction. By writing $\lambda^{(0)}=0 \in \mathbb{R}^{\alphabet}$, we note that $F_{k;\tm[1],\cdots,\tm[k-1]}$ is an entropy maximization problem in each column of $\mathsf{Z}$ and thus admits a maximizer (independent of $\pv$, $\tm$, and $k$) such that 
    \[
        \pm[0]_{a,b} = \begin{cases}
            \frac{e^{\frac{d}{d-1}\lambda^{(0)}_a} E_{a,b}}{\sum_{c: E_{c,b}>0} e^{\frac{d}{d-1} \lambda^{(0)}_c} E_{c,b}}=\frac{E_{a,b}}{\sum_{c \in \alphabet} E_{c,b}} & \text{if } E_{c,b} > 0, \\
            0 & \text{otherwise}.
        \end{cases}
    \]
    Now if we suppose a maximizer $\pm[i-1]$ satisfying our hypothesis is found, we let
    \begin{equation} \label{eq:lambda_iterate}
        \begin{aligned}
            \lambda^{(i)} :=& -\frac{d-1}{d^{i}} \DKL(\pm[i-1] || E)+ {\pm[i-1]}^T \lambda^{(i-1)}\\
            =& -\sum_{j=0}^{i-1} \frac{d-1}{d^{j+1}} \left(\prod_{\ell=j+1}^{i-1} \pm[\ell]\right)^T \DKL(\pm[j] || E)
        \end{aligned}
    \end{equation}
    and express $F_{k;\tm[i+1],\cdots,\tm[k-1]}$ as
    \begin{equation} \label{eq:tm_iterate}
    \begin{aligned}
        & \quad F_{k;\tm[i+1],\cdots,\tm[k-1]}(\mathsf{Z}) \\
        &= - \sum_{j=0}^{i-1} \frac{d-1}{d^{j+1}} \DKL(\pm[j] || E)^T \left(\prod_{\ell=j+1}^{i-1} \pm[\ell]\right) Z \left(\prod_{\ell=i+1}^{k-1} \tm[\ell]\right) \pv \\
        & \hspace{1em}-\frac{d-1}{d^{i+1}}\DKL(\mathsf{Z} || E)^T \prod_{\ell=i+1}^{k-1} \tm[\ell] \pv - \sum_{j=i+1}^{k-1} \frac{d-1}{d^{j+1}} \DKL(\tm[j] || E)^T \prod_{\ell=j+1}^{k-1} \tm[\ell] \pv \\
        &= \left({\lambda^{(i)}}^T \mathsf{Z} - \frac{d-1}{d^{i+1}}\DKL(\mathsf{Z} || E)^T\right) \prod_{\ell=i+1}^{k-1} \tm[\ell] \pv \\
        & \hspace{1em} - \sum_{j=i+1}^{k-1} \frac{d-1}{d^{j+1}} \DKL(\tm[j] || E)^T \prod_{\ell=j+1}^{k-1} \tm[\ell] \pv, \\
    \end{aligned}
    \end{equation}
    which also results in a classical entropy maximization problem in each column of $\mathsf{Z}$, and an optimal solution independent of $\pv$, $\tm$, and $k$ is
    \begin{equation} \label{eq:tm_expression}
        \pm[i]_{a,b} = \begin{cases}
            \frac{e^{\frac{d^{i+1}}{d-1}\lambda^{(i)}_a} E_{a,b}}{\sum_{c: E_{c,b}>0} e^{\frac{d^{i+1}}{d-1} \lambda^{(i)}_c} E_{c,b}} & \text{if } E_{a,b}>0, \\
            0 & \text{otherwise}.
        \end{cases}
    \end{equation}
    In this manner, we successfully construct the desired sequence and prove the optimality. Moreover, if we plug \eqref{eq:tm_expression} into \eqref{eq:lambda_iterate}, we deduce the recursive relation of $\lambda$, and the proof is complete.
\end{proof}
\begin{remark} \label{rmk:iteration}
    In Proposition \ref{prop:maximizer}, the vector $e^{d^{i+1}/(d-1) \lambda^{(i)}}$ satisfies $e^{d/(d-1) \lambda^{(0)}}=\onevec$ and the relation:
    \[
    e^{\frac{d^{i+1}}{d-1} \lambda^{(i)}} = (E^T e^{\frac{d^{i}}{d-1} \lambda^{(i-1)}})^{d} \text{ for } i \ge 1,
    \]
    which is essentially a generalization of the formula used in \cite[Algorithm 1]{Bana} for the computation of entropy. 
\end{remark}

\subsection{Step 4}
\subsubsection{Convergence of $P^{(k)}(d,E)$} \label{sec:finite_optimizer_properties}
We first prove the following theorem regarding the convergence of the optimization problems, preparing us for the proof of Theorem \ref{Thm: 1}. The maximizer $\pm^*$ satisfies the following properties.
\begin{theorem} \label{thm:finite_optimizer_properties}
    Let $d \in (1,\infty)$, $E$ satisfy assumption \eqref{eq:assumption},
    \[
    \alphabet_{\infty} = \{a \in \alphabet: \exists n \in \mathbb{N} \text{ such that } (E^n)_{a,a} > 0\},
    \]
    and 
    \[
    L = \min\{n \in \mathbb{N}: (E^n)_{a,a} > 0, \forall a \in \alphabet_{\infty}\}.
    \]
    Then, the maximizer found in Proposition \ref{prop:maximizer} satisfies the following properties.
    \begin{enumerate}[(a)]
        \item For every $\pv \in \Gamma_{\alphabet}$ and $k \ge 1$, $F_{k}(\pv, {\pm[0:k-1]}^*) = {\lambda^{(k)}}^T \pv$ takes values in $[(1-d^{-k})\alpha,(1-d^{-k})\beta]$, where
        \begin{align*}
            \alpha=\log \min_{b} \sum_{a} E_{a,b} \text{ and } \beta= \log \max_{b} \sum_{a} E_{a,b}.
        \end{align*}
        \item If $a, b \in \alphabet$ with $(E^i)_{a,b}>0$, then for all $j \ge 0$ and $k \ge 1$, 
        \begin{equation*}
            F_{k+i}(\stdvec{b}, {\pm[0:k+i-1]}^*) - \sum_{\ell=0}^{k+i-1}\frac{d-1}{d^{\ell+1}} \cdot \gamma \ge F_{k}(\stdvec{a}, {\pm[j:j+k-1]}^*) - \sum_{\ell=0}^{k-1}\frac{d-1}{d^{\ell+1}} \cdot \gamma,
        \end{equation*}
        where $\gamma=\log \min_{E_{a,b}>0} E_{a,b}$. In particular, for all $a \in \alphabet_{\infty}$ and all $0 \le i < L$,
        \[
        F_{L n + i}(\stdvec{a}, {\pm[0:L n + i -1]}^*) - \sum_{\ell=0}^{L n + i-1}\frac{d-1}{d^{\ell+1}} \cdot \gamma
        \] 
        is non-negative and increasing. 
        \item There exists $\pv^* \in \Gamma_{\alphabet}^{\mathbb{Z}_+}$ such that $(\pv^*,\pm^*)$ is a maximizer of \eqref{eq:infinite_optimization}. In particular,
        \begin{align*}
            &P^{(\infty)}(d,E)=F_{\infty}(\pv^*,\pm^*) \\
            =&\max_{a \in \alphabet_{\infty}}\lim_{n \to \infty} F_{k}(\stdvec{a}, {\pm[0:k-1]}^*) = \lim_{k \to \infty} \max_{a \in \alphabet_{\infty}} F_{k}(\stdvec{a}, {\pm[0:k-1]}^*) \\
            =& \lim_{k \to \infty} P^{(k)}(d,E).
        \end{align*}
    \end{enumerate}
\end{theorem}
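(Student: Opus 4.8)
The plan is to prove the three items in order, using the explicit maximizer and the recursion for $\lambda^{(k)}$ from Proposition \ref{prop:maximizer} as the main engine. For part (a), I would argue directly from the recursion $e^{\frac{d^{i+1}}{d-1}\lambda^{(i)}} = (E^T e^{\frac{d^{i}}{d-1}\lambda^{(i-1)}})^d$ of Remark \ref{rmk:iteration}. Writing $v^{(i)} = e^{\frac{d^{i+1}}{d-1}\lambda^{(i)}}$ with $v^{(0)}=\onevec$, one checks by induction that the entries of $v^{(i)}$ lie between $(\min_b \sum_a E_{a,b})^{(\text{something})}$ and $(\max_b \sum_a E_{a,b})^{(\text{something})}$ — more precisely, that $\frac{d-1}{d^{i}}\log v^{(i)}_a = \lambda^{(i)}_a$ lies in $[(1-d^{-i})\alpha, (1-d^{-i})\beta]$. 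The base case is trivial, and the inductive step uses that a column sum $\sum_{b: E_{b,a}>0} e^{\frac{d^i}{d-1}\lambda^{(i-1)}_b} E_{b,a}$ is sandwiched between $e^{\frac{d^i}{d-1}\min_b \lambda^{(i-1)}_b}\cdot\sum_b E_{b,a}$ and the analogous quantity with $\max$, combined with assumption \eqref{eq:assumption} ensuring the column sum of $E$ is positive. Since $F_k(\pv,\pm^*) = {\lambda^{(k)}}^T\pv$ is a convex combination of the entries of $\lambda^{(k)}$, the bound on $F_k$ follows immediately.

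For part (b), the key observation is that $F_{k}(\stdvec{a}, {\pm[j:j+k-1]}^*)$ equals the value of the optimization problem started at level $j$ with terminal vector $\stdvec{a}$, and one can lower-bound it by exhibiting a feasible path: since $(E^i)_{a,b}>0$, there is an admissible word of length $i$ from $a$ to $b$, which lets us prepend $i$ deterministic transitions (each contributing at least $\frac{d-1}{d^{\ell+1}}\gamma$ to the exponent in the objective, by the definition of $\gamma$) on top of the path realizing $F_k(\stdvec{a},\ldots)$; the shift in indices caused by prepending is absorbed because the maximizer $\pm^*$ is index-shift-covariant up to the explicit coefficient rescaling noted in Step 2. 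Collecting terms gives the stated inequality. The ``in particular'' statement then follows by taking $a=b$ and $i=L$ (so $(E^L)_{a,a}>0$ for $a\in\alphabet_{\infty}$): the sequence $n \mapsto F_{Ln+i}(\stdvec{a},\ldots) - \sum_{\ell=0}^{Ln+i-1}\frac{d-1}{d^{\ell+1}}\gamma$ is monotone by the inequality just proved, and non-negative because at $n=0$ one has $F_i(\stdvec{a},\ldots)\ge \sum_{\ell=0}^{i-1}\frac{d-1}{d^{\ell+1}}\gamma$ from the same feasible-path estimate (using an admissible word returning to $a$, or more simply the crude bound $F_i \ge (1-d^{-i})\alpha$ from part (a) together with $\alpha \ge \gamma$ after checking the constants).

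For part (c), I would first use part (b) together with part (a) to show that for each $a\in\alphabet_\infty$ the sequence $F_k(\stdvec{a},{\pm[0:k-1]}^*)$ converges as $k\to\infty$: it is eventually monotone along each residue class mod $L$ (from (b)) and uniformly bounded (from (a)), and the $L$ subsequential limits agree because consecutive terms differ by $O(d^{-k})\to 0$. Call the common limit $G(a)$. Next, since $F_k(\pv,\pm^*) = {\lambda^{(k)}}^T\pv$ is affine in $\pv$, its maximum over $\Gamma_\alphabet$ is attained at some vertex $\stdvec{a}$, and one argues (by the same deterministic-path trick, or by noting that a state outside $\alphabet_\infty$ forces the path eventually into $\alphabet_\infty$ with a bounded one-time cost) that the maximizing vertex can be taken in $\alphabet_\infty$; hence $P^{(k)}(d,E) = \max_{a\in\alphabet_\infty} F_k(\stdvec{a},{\pm[0:k-1]}^*)$. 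Taking $k\to\infty$ and interchanging the (finite) max with the limit yields $\lim_k P^{(k)}(d,E) = \max_{a\in\alphabet_\infty} G(a)$. Finally, to produce the infinite maximizer $(\pv^*,\pm^*)$: extend $\pm^*$ to all of $\mathbb{Z}_+$ by the same formula, pick $a^*$ achieving $\max_{a}G(a)$, and define $\pv^{*(j)}$ as a limit point (using compactness of $\Gamma_\alphabet^{\mathbb{Z}_+}$ and a diagonal argument) of the finite maximizers' vector sequences with terminal $\stdvec{a^*}$ pushed to infinity; continuity of $F_\infty$ on the compact feasible domain $\proddom{\infty}$ identifies $F_\infty(\pv^*,\pm^*)$ with $\max_a G(a) = P^{(\infty)}(d,E)$, and the already-established $\mathbf{P} = \lim_k P^{(k)}(d,E)$ from Step 2 closes the chain of equalities.

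The main obstacle I expect is part (b): getting the index bookkeeping exactly right when prepending deterministic transitions, since the objective's coefficients $\frac{d-1}{d^{j+1}}$ are not translation invariant, so one must track how $F_k(\stdvec{a},{\pm[j:j+k-1]}^*)$ relates to $F_k(\stdvec{a},{\pm[0:k-1]}^*)$ under the shift $j\mapsto 0$ — the cleanest route is probably to verify a clean identity of the form $F_{k}(\pv,{\pm[j:j+k-1]}^*) = d^{-j} F_k(\pv,{\pm[0:k-1]}^*) + (\text{explicit lower-order-in-}d^{-j}\text{ terms})$ directly from \eqref{eq:lambda_iterate}, and then the monotonicity and non-negativity statements drop out by comparing $\lambda^{(k+i)}$ against the concatenated data. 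The other parts are essentially bookkeeping around the explicit formula and standard compactness.
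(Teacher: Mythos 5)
Your part (a) is essentially the paper's induction on the recursion for $\lambda^{(k)}$, and the mechanism you describe for part (b) --- an admissible word joining $b$ to $a$, each step costing at least $\frac{d-1}{d^{\ell+1}}\gamma$, plus optimality of $\pm^*$ --- is also the paper's. However, the ``cleanest route'' you propose for the bookkeeping would fail: no identity of the form $F_k(\pv,{\pm[j:j+k-1]}^*)=d^{-j}F_k(\pv,{\pm[0:k-1]}^*)+\cdots$ holds (the two sides involve different matrices, and the coefficients of $F_k$ are attached to positions in the objective, not to the matrices' original indices), and none is needed. In the bottom-up convention the terminal vector $\stdvec{b}$ sits at index $k+i$, so one realizes a word $\xi_0=a,\dots,\xi_i=b$ with $E_{\xi_\ell,\xi_{\ell+1}}>0$ by deterministic columns placed at positions $k,\dots,k+i-1$, while ${\pm[j]}^*,\dots,{\pm[j+k-1]}^*$ occupy positions $0,\dots,k-1$ with their coefficients untouched; since the product of the deterministic matrices sends $\stdvec{b}$ to $\stdvec{a}$, the concatenated admissible tuple has value exactly $F_k(\stdvec{a},{\pm[j:j+k-1]}^*)$ plus at least $\sum_{\ell=k}^{k+i-1}\frac{d-1}{d^{\ell+1}}\gamma$, and Proposition \ref{prop:maximizer} (optimality of ${\pm[0:k+i-1]}^*$ for $F_{k+i}(\stdvec{b},\cdot)$) gives (b) with no rescaling at all.

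The genuine gap is in (c). You define $G(a)=\lim_{k}F_k(\stdvec{a},{\pm[0:k-1]}^*)$ and justify existence of the full limit by claiming consecutive terms differ by $O(d^{-k})$; that claim is false. Take $E=\bigl(\begin{smallmatrix}0&2\\1&0\end{smallmatrix}\bigr)$, which satisfies \eqref{eq:assumption} with $\alphabet_\infty=\alphabet$, $L=2$, $\gamma=0$: the recursion of Remark \ref{rmk:iteration} gives $\lambda^{(k)}_1\to\frac{d}{d+1}\log 2$ along even $k$ and $\lambda^{(k)}_1\to\frac{1}{d+1}\log 2$ along odd $k$, so $F_k(\stdvec{1},{\pm[0:k-1]}^*)=\lambda^{(k)}_1$ oscillates, $G(1)$ is undefined, and consecutive differences do not vanish. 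This is precisely why the paper takes limits only along arithmetic progressions $k=Ln+i$, where (b) gives monotonicity of the $\gamma$-corrected sequence, and then uses \eqref{eq:A_infinity_property} to show that $\max_{a\in\alphabet_\infty}$ of the subsequential limits does not depend on $i$; the limits displayed in (c) must be read in that sense, and your subsequent identity $\lim_k P^{(k)}(d,E)=\max_a G(a)$ rests on the false lemma. Two further steps you assert also need arguments: $P^{(k)}(d,E)=\max_{a\in\alphabet_\infty}F_k(\stdvec{a},{\pm[0:k-1]}^*)$ is not true for fixed $k$ (affinity only gives the maximum over all of $\alphabet$; transient symbols are eliminated only in the limit, via (b) and \eqref{eq:A_infinity_property}), and the inequality $P^{(\infty)}(d,E)\le\lim_k P^{(k)}(d,E)$, which your chain needs, is not automatic from compactness --- the paper obtains it from $F_\infty(\pv^*,\pm^*)=\lim_k F_k({\pv[0:k]}^*,{\pm[0:k-1]}^*)\le\liminf_k\max_{a\in\alphabet}F_k(\stdvec{a},{\pm[0:k-1]}^*)$, whereas your construction of $(\pv^*,\pm^*)$ by a diagonal argument only yields the lower bound.
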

\begin{proof}
    (a) We show by induction that the map $\lambda^{(i)} \xmapsto{f_i} \lambda^{(i+1)} $ in Proposition \ref{prop:maximizer} satisfies that 
    \[
    f_k \circ \cdots \circ f_0(\lambda^{(0)})_a \in [(1-d^{-k-1}) \alpha, (1-d^{-k-1}) \beta] \quad \text{ for all } k \ge 0. 
    \]
    which follows from the monotonicity of $f_i$: $f_i(\lambda') \ge f_i(\lambda'')$ if $\lambda' \ge \lambda''$. When $i=0$,
    \begin{align*}
        f_0(\lambda^{(0)})_a &= \frac{d-1}{d} \log \sum_{b:E_{b,a}>0} E_{b,a} \in [(1-d^{-1})\alpha,(1-d^{-1})\beta].
    \end{align*}
    Now if the hypothesis holds for $k-1$, then
    \begin{align*}
        f_k(f_{k-1} \circ \cdots \circ f_0(\lambda^{(0)}))_a & \le \frac{d-1}{d^{k+1}} \log \sum_{b:E_{b,a}>0} \beta^{(1-d^{-k}) \cdot \frac{d^{k+1}}{d-1}} E_{b,a} \le (1-d^{-k-1}) \beta,
    \end{align*}
    and a similar argument also applies to the lower bound $f_k(f_{k-1} \circ \cdots \circ f_0(\lambda^{(0)}))_a \ge (1-d^{-k-1}) \alpha$. The first item then holds by induction. 
    
    (b) For each $a, b \in \alphabet$, one can choose a sequence $(\xi_{\ell})_{\ell \ge 0}$ as follows:
    \begin{equation} \label{eq:maximizer_extension}
        (\xi_{\ell})_{\ell \ge 0}=(\xi^{a,b,i}_{\ell})_{\ell \ge 0} \text{ such that } \xi_0=a, \xi_{i}=b, E_{\xi_{\ell},\xi_{\ell+1}}>0,  \forall \ell \ge 0.
    \end{equation}  
    Then, we take transition matrices ${\pm[k+\ell]}'$ such that ${\pm[k+\ell]}'_{\xi_{\ell},\xi_{\ell+1}}=1$. Under the circumstances, we have $(\prod_{\ell=0}^{k+1} {\pm[k+j]}')_{a,b} \stdvec{b} = \stdvec{a}$. Therefore, according to \eqref{eq:unconstrained_optimization},
    \begin{align*}
        & F_{k}(\stdvec{a},{\pm[j:j+k-1]}^*) + \sum_{\ell=k}^{k+i-1}\frac{d-1}{d^{\ell+1}} \cdot \gamma \\
        = & - \sum_{\ell=0}^{k-1} \frac{d-1}{d^{\ell+1}} \DKL({\pm[j+\ell]}^*|E)^T \prod_{\ell'=j+\ell+1}^{k-1} {\pm[\ell']}^* \stdvec{a} + \sum_{\ell=k}^{k+i-1}\frac{d-1}{d^{\ell+1}} \cdot \gamma \\
        \le & F_{k+i}(\stdvec{b},({\pm[j:j+k-1]}^*,{\pm[j+k:j+k+i-1]}')) \\
        \le & F_{k+i}(\stdvec{b},{\pm[0:k+i-1]}^*).
    \end{align*}
    This finishes the proof.

    (c) We start by noting that the existence of optimizer $(\pv',\pm')$ is guaranteed by compactness of the feasible domain and continuity of the objective function. Next, by recursively replacing $\pm'$ by $\pm^*$ according to Proposition \ref{prop:maximizer}, the compactness again asserts that there exists a maximizer of the form $(\pv^*,\pm^*)$ (and thus the first equality).

    The third equality and the existence of the limits therein follow from (b). Precisely, for each $a \in \alphabet$ and each $0 \le i < L$, the following relations
    \[
    \lim_{n \ge 0} \max_{a \in \alphabet_{\infty}} \cdot = \sup_{n \ge 0} \max_{a \in \alphabet_{\infty}} \cdot = \max_{a \in \alphabet_{\infty}} \sup_{n \ge 0} \cdot = \max_{a \in \alphabet_{\infty}} \lim_{n \to \infty} \cdot
    \]
    coincide on the sequence
    \[
    F_{L n + i}(\stdvec{a},{\pm[0:Ln + i - 1]}^*) + \sum_{\ell=L n + i}^{\infty}\frac{d-1}{d^{\ell+1}} \cdot \gamma.
    \]
    To finish the proof of this part, we note the following fact due to assumption \eqref{eq:assumption}:
    \begin{equation} \label{eq:A_infinity_property}
        \text{For each } a \in \alphabet \text{ and } k \ge \norm{\alphabet}, \text{ there exists } b \in \alphabet_{\infty} \text{ such that } (E^k)_{a,b} > 0.
    \end{equation}
    Immediately this yields that
    \[
    \lim_{n \to \infty} \max_{a \in \alphabet_{\infty}} F_{L n + i}(\stdvec{a},{\pm[0:Ln + i - 1]}^*) = \lim_{n \to \infty} \max_{a \in \alphabet_{\infty}} F_{L n}(\stdvec{a},{\pm[0:Ln - 1]}^*) \text{ for all } i,
    \]
    justifying the existence of the limits and their coincidence.
    
    To prove the second equality, for each $a \in \alphabet_{\infty}$ and each $k \in \mathbb{Z}_{+}$, there exists an extension $(\pv(a,k),\pm(a,k)) \in \Gamma_{\alphabet}^{\mathbb{Z}_+} \times \Upsilon_{\alphabet}^{\mathbb{Z}_+}$ of $((\prod_{j=\ell}^{k-1} {\pm[\ell]}^* \stdvec{a})_{\ell=0}^{k},{\pm[0:k-1]}^*)$ (using the sequence $(\xi^{a,a,L}_{\ell})_{\ell}$ in \eqref{eq:maximizer_extension}) such that
    \begin{align*}
        \max_{a \in \alphabet_{\infty}} \lim_{n \to \infty} F_{k}(\stdvec{a},{\pm[0:k-1]}^*) &= \max_{a \in \alphabet_{\infty}} \lim_{n \to \infty} F_{k}(\stdvec{a},{\pm[0:k-1]}^*) + \sum_{\ell=k}^{\infty}\frac{d-1}{d^{\ell+1}} \cdot \gamma \\
        &\le \max_{a \in \alphabet_{\infty}} F_{\infty}(\pv(a,k),\pm(a,k)) \le  F_\infty(\pv^*, \pm^*),
    \end{align*}
    For the other inequality, we note that due to the affinity of $\pv \mapsto F_k(\pv,{\pm[0:k-1]}^*)$,
    \begin{equation} \label{eq:limit_existence_1}
        F_{\infty}(\pv^*,\pm^*) = \lim_{k \to \infty} F_{k}({\pv[0:k]}^*,{\pm[0:k-1]}^*) \le \liminf_{k \to \infty} \max_{a \in \alphabet} F_{k}(\stdvec{a}, {\pm[0:k-1]}^*),
    \end{equation}
    As a result, due to \eqref{eq:A_infinity_property}, we may apply (b) to deduce
    \begin{equation} \label{eq:limit_existence}
    \begin{aligned}
        & \lim_{k \to \infty} \max_{a \in \alphabet_{\infty}}F_{k}(\stdvec{a},{\pm[0:k-1]}^*) \\
        = & \lim_{k \to \infty} \max_{a \in \alphabet_{\infty}}F_{k}(\stdvec{a},{\pm[0:k-1]}^*) - \sum_{\ell=k-\norm{\alphabet}}^{k-1} \frac{d-1}{d^{\ell+1}} \cdot \gamma \\
        \ge & \limsup_{k \to \infty} \max_{a \in \alphabet} F_{k - \norm{\alphabet}}(\stdvec{a},{\pm[0:k-\norm{\alphabet}-1]}^*) \ge F_{\infty}(\pv^*,\pm^*)
    \end{aligned}
    \end{equation}
    Combining the inequalities above and the third equality proves the second equality. 
    
    Finally, we note that the last equality, together with the existence of the limit, also follows from \eqref{eq:limit_existence_1} and \eqref{eq:limit_existence}, given that $P^{(k)}(d,E) = \max_{a \in \alphabet} F_{k}(\stdvec{a},{\pm[0:k-1]}^*)$.
\end{proof}

\subsubsection{Proof of Theorem \ref{thm:pressure_monotonicity}} \label{sec:pressure_monotonicity}

\noindent Let $\beta$ and $\gamma$ be the constants defined in Theorem \ref{thm:finite_optimizer_properties}, which are independent of $d$. We show that $P^{(k)}$ converges to $P^{(\infty)}$ in a strong sense.
\begin{proof}
    (a) For our convenience, we denote $q_k(d)=d^{-k}-d^{-k-1}$ throughout the rest of the discussion. In addition, we denote by ${\pm}^*(d)$ the optimal matrices given in Proposition \ref{prop:maximizer} with the associated $\lambda^{(k)}$ denoted by ${\lambda^{(k)}}^*(d)$.
    
    Our proof of continuity and increasing property relies heavily on the uniform convergence of ${\lambda^{(Ln + i)}}^*(d)_a$: for every $a \in \alphabet_{\infty}$ and every $0 \le i \le L-1$, the function ${\Lambda^{(i)}}^*(d)_a := \lim_{n \to \infty} {\lambda^{(Ln + i)}}^*(d)_a$, which is well-defined for $d \in (1,\infty)$ according to Theorem \ref{thm:finite_optimizer_properties} (b), is continuous with the convergence uniform on every compact subinterval of $I \subset (1,\infty)$. To begin with, we note ${\lambda^{(k)}}^*(d)$ is obviously a continuously differentiable function from its definition in Proposition \ref{prop:maximizer}. To prove the uniform convergence on compact subinterval, we verify that it is an equicontinuous sequence by computing the derivative of ${\lambda^{(k)}}^*(d)$:
    \begin{equation} \label{eq:derivative_equicts}
    \begin{aligned}
        &\left({\lambda^{(k)}}^*\right)'(d) = q_k'(d) \log \left(E^T e^{q_k(d)^{-1} {\lambda^{(k-1)}}^*(d)}\right) \\
        =& q_k'(d) \log\left(E^T e^{q_k(d)^{-1} {\lambda^{(k-1)}}^*(d)}\right)+\frac{q_k(d)}{\left(E^T e^{q_k(d)^{-1} {\lambda^{(k-1)}}^*(d)}\right)} \cdot \\
        & \left[E^T \mathrm{diag}\left(-\frac{q_k'(d)}{q_k(d)^2} {\lambda^{(k-1)}}^*(d)+\frac{\left({\lambda^{(k-1)}}^*\right)'(d)}{q_k(d)} \right) e^{q_k(d)^{-1} {\lambda^{(k-1)}}^*(d)}\right] \\
        =& - q_k'(d) \DKL({\pm[k-1]}^*(d) || E) + {{\pm[k-1]}^*(d)}^T \left({\lambda^{(k-1)}}^*\right)'(d) \\
        =& \sum_{j=0}^{k-1}  (j d^{-j-1} - (j+1) d^{-j-2})  \left(\prod_{\ell=j+1}^{k-1} {\pm[\ell]}^*(d)\right)^T \DKL({\pm[j]}^*(d) | E)
    \end{aligned}
    \end{equation}
    and note that the derivative is uniformly bounded on the interval $I$:
    \[
    \left|\left({\lambda^{(Ln+i)}}^*\right)'(d)\right| \le \sup_{d \in I} \sum_{j=0}^{\infty}  |j d^{-j-1} - (j+1) d^{-j-2}| (\norm{\beta}+\norm{\gamma})< \infty.
    \]
    The equicontinuity then follows from the estimate and the mean value theorem. Now that the functions, when restricted to $I$, are clearly uniformly bounded, we apply the Arzel\`{a}-Ascoli theorem to obtain uniform convergence. 

    For the continuity of $P^{(\infty)}(d,E)$, we apply our claim to the function
    \[
    \max_{a \in \alphabet_{\infty}} {\lambda^{(Ln + i)}}^*(d)_a =  \max_{a \in \alphabet_{\infty}} F_{Ln}(\stdvec{a}, {\pm[0:Ln-1]}^*(d)). 
    \]
    to deduce uniform convergence on compact subinterval, which, together with Theorem \ref{thm:finite_optimizer_properties} (c), yields the desired result.

    To prove that $P^{(\infty)}(d,E)$ is increasing in $d$, we first associate with each $a \in \alphabet_{\infty}$ an open set $I_a$ such that 
    \[
    {\Lambda^{(Ln)}}^*(d)_a \ge {\Lambda^{(Ln)}}^*(d)_b, \text{ for all } b \in \alphabet.
    \]
    and that $\cup_a I_a$ is dense in $(1,\infty)$. This follows inductively from the fact that for any continuous functions $f_1, f_2: U \subset \mathbb{R}^n \to \mathbb{R}$ defined on some open set $U$, 
    \begin{align*}
        \{x \in U: f_1(x) > f_2(x)\} &\cup \{x \in U: f_2(x) > f_1(x)\} \\
        &\cup int(\{x \in U: f_2(x) = f_1(x)\})
    \end{align*}
    is dense in $U$. We then consider an alternative form of \eqref{eq:derivative_equicts}:
    \begin{equation} \label{eq:derivative}
        \begin{aligned}
            & \left(\stdvec{a}^T {\lambda^{(Ln)}}^*\right)'(d) \\
            =& \sum_{j=1}^{Ln-1} \sum_{i=1}^{j} (d^{-j-1}-d^{-j-2}) \stdvec{a}^T \left(\prod_{\ell=j+1}^{Ln-1} {\pm[\ell]}^*(d)\right)^T \DKL({\pm[j]}^*(d) || E) \\
            & \hspace{3em} - \sum_{j=0}^{Ln-1} d^{-j-2} \stdvec{a}^T \left(\prod_{\ell=j+1}^{Ln-1} {\pm[\ell]}^*(d)\right)^T \DKL({\pm[j]}^*(d) || E) \\
            =& -\sum_{i=1}^{Ln-1} d^{-i-1} F_{Ln-i}(\stdvec{a},{\pm[i:Ln-1]}^*(d);d,E) + \frac{d^{-2}}{1-d^{-1}} {\lambda^{(Ln)}_{a}}^*(d),
        \end{aligned}
    \end{equation}
    where the last equality follows from swapping the order of summation. If we further 
    express $d^{-2} \cdot (1-d^{-1})^{-1}=\sum_{i=1}^{\infty} d^{-i-1}$, then the above becomes
    \begin{equation*} 
        \begin{aligned}
            & \left(\stdvec{a}^T {\lambda^{(L n)}}^*\right)'(d) \\
            = & - \sum_{i=1}^{L n-1} d^{-i-1} \left[F_{Ln-i}(\stdvec{a},{\pm[i:Ln-1]}^*(d);d,E) - {\lambda^{(L n)}}^*(d)_a\right] \\
            &\hspace{3em} + \sum_{i=L n}^{\infty} d^{-i-1} {\lambda^{(L n)}}^*(d)_a, \\
            \ge & \sum_{i=1}^{L n-1} d^{-i-1} \left[{\lambda^{(L n)}}^*(d)_a - {\lambda^{(L n - i)}}^*(d)_a\right] + \sum_{i=L n}^{\infty} d^{-i-1} {\lambda^{(L n)}}^*(d)_a,
        \end{aligned}
    \end{equation*}
    where the last inequality follows again from Theorem \ref{thm:finite_optimizer_properties} (b) and the last function, due to our claim, converges uniformly on every compact subinterval to the following function:
    \[
    \sum_{i=1}^{L} \frac{d^L - 1}{d^{L+i+1}-d^{L+i}} \left(\Lambda^{(0)}(d)_a^* - \Lambda^{(L - i)}(d)_a^*\right),
    \]
    which is non-negative for all $x \in I_a$. By the mean value theorem, this provides the following bound for $[d', d''] \subset I_a$:
    \begin{align*}
        & P^{(\infty)}(d'',E)-P^{(\infty)}(d',E) \ge \limsup_{n \to \infty} \inf_{d \in [d',d'']} \left(\stdvec{a}^T {\lambda^{(L n)}}^*\right)'(d)  \cdot (d''-d') = 0.
    \end{align*}
    The proof is thus finished by noting that $\cup_{a \in \alphabet} I_a$ is open and dense in $(1,\infty)$ and $P^{(\infty)}(d,E)$ is continuous.

    (b) It follows essentially from Theorem \ref{thm:finite_optimizer_properties} (c) that $P^{(\infty)}(d,E) = \lim_{k \to \infty} P^{(k)}(d,E)$, where the limit is known to converge to $\mathbf{P}(\aprod[d][E],E)$ as discussed in Section \ref{sec:step_2}.
    
    (c) On the one hand, Theorem \ref{thm:finite_optimizer_properties} (a) guarantees $\lim_{d \to \infty} P^{(\infty)}(d,E) \le \beta$. On the other hand, Theorem \ref{thm:finite_optimizer_properties} (b) together with assumption \eqref{eq:assumption} implies, by taking $k=1$, $j=0$ and letting $i \to \infty$, that
    \begin{align*}
        \lim_{d \to \infty} p^{(\infty)}(d,E) =& \lim_{d \to \infty} \lim_{i \to \infty} \max_{b \in \alphabet} F_{i+1}(\stdvec{b}, {\pm[0:i]}^*;d,E) \\
        \ge & \lim_{d \to \infty} \lim_{i \to \infty} \max_{a \in \alphabet} F_{1}(\stdvec{a}, {\pm[0:0]}^*;d,E) + \sum_{\ell=1}^{i} (d^{-k} - d^{-k-1}) \cdot \gamma, \\
        = &\lim_{d \to \infty} (1-d^{-1}) \beta + d^{-1} \cdot \gamma = \beta
    \end{align*}
    
    For $\lim_{d \to 1+} P^{(\infty)}(d,E)$, we plug the Parry measure 
    \[
    \pm[i]_{a,b} = \frac{E_{b,a} w_{a}}{\rho(E) w_{b}} \quad \text{ and } \quad \pv_{a} = v_{a} w_{a},
    \]
    into \eqref{eq:limiting_optimization} to deduce $\lim_{d \to 1+} P^{(\infty)}(d,E) \ge \log \rho(E)$. To prove the other inequality, for every $\epsilon>0$ we first construct a new interaction matrix
    \begin{equation*}
        E'=\begin{bmatrix}
            \rho(E)+\epsilon & 0 \\
            \onevec & E
        \end{bmatrix}.
    \end{equation*}
    Since $P^{(\infty)}(d,E) \le P^{(\infty)}(d,E')$ for and $d > 1$ according to \eqref{eq:infinite_optimization}, it is sufficient to show that $\lim_{d \to 1+} P^{(\infty)}(d,E') \le \log \rho(E') = \log \rho(E)+\epsilon$. Let $v$ and $w$ be the left and right eigenvector, respectively, associated with $\rho(E')$ such that $w^T v=1$. According to the spectral decomposition of $E'$, we know
    $\lim_{n \to \infty} \rho(E')^{-n} (E')^n = v w^T$ is non-negative, and thus we can assume without loss of generality that $v=\rho(E')^{-1} E' v$ is a positive probability vector. We see from Remark \ref{rmk:iteration} that for the system defined by $E'$, there exists $C_d = \max_{a \in \alphabet'} v_a^{1-d} \ge 1$ such that 
    \begin{equation} \label{eq:uniform_bound_1-}
        \begin{aligned}
            &v^T e^{\frac{d^{k+1}}{d-1} \lambda^{(k)}} = v^T ((E')^T e^{\frac{d^{k}}{d-1} \lambda^{(k-1)}})^d \\
            \le & C_d \cdot (v^T (E')^T e^{\frac{d^{k}}{d-1} \lambda^{(k-1)}})^d = C_d \cdot \rho(E')^d (v^T e^{\frac{d^{k}}{d-1} \lambda^{(k-1)}})^d.
        \end{aligned}
    \end{equation}
    Indeed, we note that for all non-negative numbers $x_a$,
    \begin{align*}
        1 \le \frac{\sum_{a \in \alphabet'} v_a x_a^d}{(\sum_{a \in \alphabet'} v_a x_a)^d} \le \max_{a \in \alphabet'} \frac{v_a x_a^d}{v_a^d x_a^d}=\max_{a \in \alphabet'} v_a^{1-d}.
    \end{align*}
    This provides a uniform bound for $\lim_{d \to 1+} P^{(\infty)}(d,E')$, and the claim is proved by estimating the pressure using \eqref{eq:uniform_bound_1-} and letting $d \to 1+$. More specifically,
    \begin{align*}
        v^T e^{\frac{d^{k+1}}{d-1} \lambda^{(k)}} &\le C_d^{1+d+\cdots+d^{k-1}} \rho(E')^{d+d^2+\cdots+d^k} (v^T e^{\frac{d^{k}}{d-1} \lambda^{(0)}})^d \\
        &= C_d^{1+d+\cdots+d^{k-1}} \rho(E')^{d+d^2+\cdots+d^k},
    \end{align*}
    and thus $\lim_{d \to 1+} P^{(\infty)}(d,E) \le  \log (\rho(E)+\epsilon)$.
\end{proof}

\section{Experiments} \label{sec:exp}
In this section, we present two examples related to Theorem \ref{thm:pressure_monotonicity}. Consider the golden-mean tree-shifts $\aprod[d][G]$ with 
\[
G=\begin{bmatrix}
    1 & 1 \\
    1 & 0
\end{bmatrix}.
\]
We claim that the function $P^{(\infty)}(d,E)$ can be approximated by $P^{(k)}(d,E)$ with a crude estimate of error
\begin{align*}
    \frac{d^{-\ell}}{1-d^{-1}} \cdot \gamma & \le P^{(\infty)}(d,E) - P^{(k)}(d,E) \le \frac{d^{-\ell}}{1-d^{-1}} \cdot \beta.
\end{align*}
Indeed, the first inequality follows from Theorem \ref{thm:finite_optimizer_properties} (b), and the second follows, by comparing \eqref{eq:limiting_optimization} and \eqref{eq:infinite_optimization}, from the fact that each term in \eqref{eq:infinite_optimization} admits an upper bound $- \frac{d-1}{d^{j+1}} \DKL(\pm[j]|E) \pv[j+1] \le \frac{d-1}{d^{j+1}} \beta$. The figure of the topological entropy is given in Figure \ref{fig:golden-mean_top_entropy}. For the purpose of demonstration of Theorem \ref{thm:pressure_monotonicity} for general interaction matrices, we include Figure \ref{fig:golden-mean_top_pressure} to show the increasing property of pressure when 
\[
E=\begin{bmatrix}
    2 & 2 \\
    1 & 0
\end{bmatrix}.
\]
Both of the figures turn out to be consistent with Theorem \ref{thm:pressure_monotonicity} in the sense that both functions are continuous, $\lim_{d \to \infty} P^{(\infty)}(d,G)=\log_{10} 2 \approx 0.3010$, $\lim_{d \to \infty} P^{(\infty)}(d,E)=\log_{10} 3 \approx 0.4771$, $\lim_{d \to 1+} P^{(\infty)}(d,G)=\log_{10} \frac{1+\sqrt{5}}{2} \approx 0.2090$, and $\lim_{d \to 1+} P^{(\infty)}(d,E)=\log_{10} (1+\sqrt{3}) \approx 0.4365$.

\begin{figure}
    \centering
    \includegraphics{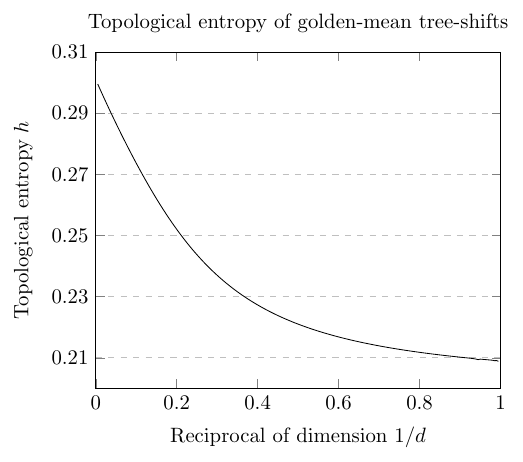}
    \caption{Topological entropy of golden-mean tree-shifts}
    \label{fig:golden-mean_top_entropy}
\end{figure}
\begin{figure}
    \centering
    \includegraphics{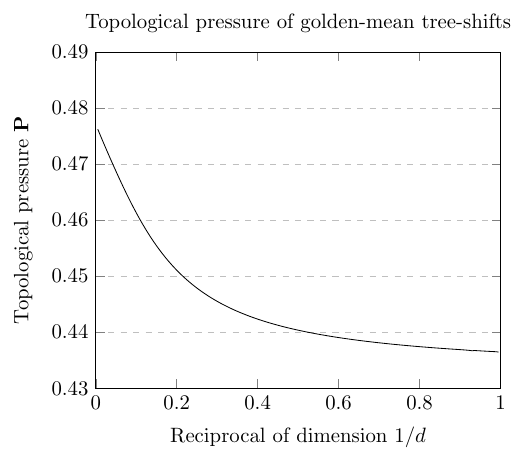}
    \caption{Topological pressure of golden-mean tree-shifts}
    \label{fig:golden-mean_top_pressure}
\end{figure}

\section*{Acknowledgments}
The first author was supported by Ministry of Science and Technology (Contract No. MOST 111-2115-M-004-005-MY3). We would like to thank the anonymous referee for the helpful comments that greatly improve the quality of the manuscript.

\bibliographystyle{amsplain_abbrv}
\bibliography{reference,ban}

\end{document}